\theoremstyle{definition}
\newtheorem{thm}{Theorem}[section]
\newtheorem{lem}[thm]{Lemma}
\newtheorem{th-def}[thm]{Theorem-Definition}
\newtheorem{cor}[thm]{Corollary}
\newtheorem{defn-lem}[thm]{Definition-Lemma}
\newtheorem{prop}[thm]{Proposition}
\newtheorem{rem}[thm]{Remark}
\numberwithin{equation}{section}
\def \Q{{\mathbb Q}}
\def \N{{\mathbb N}}
\def \C{{\mathbb C}}
\def \F{{\mathbb F}}
\def \Z{{\mathbb Z}}
\def \R{{\mathbb R}}
\def\map#1.#2.{#1 \longrightarrow #2}
\def\rmap#1.#2.{#1 \dasharrow #2}
\DeclareMathOperator{\Hom}{Hom}
\DeclareMathOperator{\Spf}{Spf}
\DeclareMathOperator{\im}{Im}
\DeclareMathOperator{\Aut}{Aut}
\def\fb#1.{\underset #1 \to \times}
\def\pr#1.{\Bbb P^{#1}}
\def\ring#1.{\mathcal O_{#1}}
\def\mlist#1.#2.{{#1}_1,{#1}_2,\dots,{#1}_{#2}}
\def\Hom{\operatorname{Hom}}
\def\uloopr#1{\ar@'{@+{[0,0]+(-4,5)} @+{[0,0]+(0,10)}
@+{[0,0]+(4,5)}}
  ^{#1}}
\def\dloopr#1{\ar@'{@+{[0,0]+(-4,-5)} @+{[0,0]+(0,-10)}
@+{[0,0]+(4,-5)}}
  _{#1}}
\def\rloopd#1{\ar@'{@+{[0,0]+(5,4)} @+{[0,0]+(10,0)}
@+{[0,0]+(5,-4)}}
  ^{#1}}
\def\lloopd#1{\ar@'{@+{[0,0]+(-5,4)} @+{[0,0]+(-10,0)}
@+{[0,0]+(-5,-4)}}
  _{#1}}
\long\def\ignore#1{}
\long\def\ignore#1{#1}
\title{A lifting of an automorphism of a K3 surface over odd characteristic}
\author{Junmyeong Jang}
\date{}
\begin{document}

\newpage
 \normalsize
\maketitle
\vspace{0.3cm}
\begin{center}
{\Large
Abstract}
\end{center}
\small
In this paper, we prove that, over an algebraically closed field of odd characteristic, a weakly tame automorphism of a K3 surface of finite height can be lifted over the ring of Witt vectors of the base field. Also we prove that a non-symplectic tame automorphism of a supersingular K3 surface or a symplectic tame automorphism of a supersingular K3 surface of Artin-invariant at least 2 can be lifted over the ring of Witt vectors. Using these results, we prove, for a weakly tame K3 surface of finite height, there is a lifting over the ring of Witt vectors to which whole the automorphism group of the K3 surface can be lifted. Also we prove a K3 surface equipped with a purely non-symplectic automorphism of a certain order is unique up to isomorphism.

\vspace{0.3cm}

\normalsize
\medskip
     \section{Introduction}
          For an algebraic complex K3 surface $X$, the second integral singular cohomology $H^{2}(X,\Z)$ is a free abelian group of rank 22 equipped with a lattice structure induced by the cup product. As a lattice
     $$H^{2}(X,\Z) = U^{3} \oplus E_{8},$$
     here $U$ is a unimodular hyperbolic lattice of rank 2 and $E_{8}$ is a negative definite unimodular root lattice of rank 8.
     By the Lefschetz (1,1) theorem, the Neron-Severi group of $X$, $NS(X)$ is a primitive sublattice of $H^{2}(X,\Z)$ and
     $$NS(X) = H^{1,1}(X) \cap H^{2}(X,\Z)$$
     in $H^{2}(X,\C)$. In particular the rank of $NS(X)$ is at most 20.
     We say the rank of $NS(X)$ is the Picard number of $X$ and it is denoted by $\rho(X)$.
      $NS(X)$ is an even integral lattice of signature $(1, \rho(X)-1)$. 
     We say the orthogonal complement of the embedding
     $$NS(X) \hookrightarrow H^{2}(X,\Z)$$
     the transcendental lattice of $X$ and we denote the transcendental lattice of $X$ by $T(X)$. $T(X)$ is an integral lattice of signature
     $(2, 20-\rho(X))$.
     By the Hodge decomposition,
     $H^{0}(X, \Omega ^{2} _{X/\C})$ is a direct factor of $T(X) \otimes \C$
     and there exists a projection
     $$T(X) \otimes \C \to H^{0}(X, \Omega _{X/\C} ^{2}).$$

     By the Torelli theorem for complex K3 surfaces, an isometry $\psi \in O(H^{2}(X,\Z))$ is induced by an automorphism of $X$ if and only if $\psi$ preserves the line of holomorphic 2 forms $H^{0}(X,\Omega ^{2}_{X/\C})$ in $H^{2}(X,\Z) \otimes \C$ and the ample cone inside $NS(X) \otimes \R$.
     Let
     $$\chi _{X} : \Aut(X) \to O(T(X))$$
     and
     $$\rho _{X} : \Aut(X) \to Gl(H^{0}(X, \Omega _{X/\C} ^{2}))$$
     be the representations of the automorphism group of $X$ on the transcendental lattice and the global two forms respectively.
     Since $H^{0}(X, \Omega _{X/\C} ^{2})$ is a direct factor of $T(X) \otimes \C$, there is a canonical projection
     $$p_{X} : \im \chi _{X} \to \im \rho _{X}.$$
     It is known that $p_{X}$ is isomorphic and $\im \chi _{X}$ and $\im \rho _{X}$ are finite cyclic groups (\cite{Ni1}).
     Assume the order of $\im \rho _{X}$ is $N$ and $\xi _{N} = \rho _{X}(\alpha)$ is a primitive $N$-th root of unity. Then
     in a natural way, $T(X)$ is a free $\Z [\xi _{N}]$-module and the rank of $T(X)$ is a multiple of $\phi (N)$ (\cite{MO}).
     Here $\phi$ is the Euler $\phi$-function.\\

     An automorphism $\alpha \in \Aut (X)$ is symplectic if $\rho _{X} (\alpha) =1$. An automorphism $\alpha$ is purely non-symplectic if $\alpha$ is of finite order grater than 1 and the order of $\alpha$ is equal to the order of $\rho _{X} (\alpha)$. \\

     Assume $k$ is an algebraically closed field of odd characteristic $p$. Let $W$ be the ring of Witt vectors of $k$ and $K$ be the fraction field of $W$. Assume $X$ is a K3 surface over $k$. The formal Brauer group of $X$, $\widehat{Br} _{X}$ is a smooth one dimensional formal group over $k$ and the height of $\widehat{Br}_{X}$ is an integer between 1 and 10 or $\infty$.\\

     If the height of $X$ is $\infty$, we say $X$ is supersingular and it is known that $\rho (X) =22$ (\cite{C}, \cite{MA}, \cite{M}).
     The discriminant group of $NS(X)$, $(NS(X))^{*}/NS(X)$ is $(\Z/p) ^{2\sigma}$ for an integer $\sigma$ between 1 and 10. We call $\sigma$ the Artin-invariant of $X$. It is known that the lattice structure of $NS(X)$ is determined by the base characteristic $p$ and the Artin-invariant (\cite{RS2}). All the supersingular K3 surfaces of Artin-invariant $\sigma$ form a family of $\sigma -1 $ dimension over $k$ and a supersingular K3 surface of Artin-invariant 1 is unique up to isomorphism \cite{Og1}.\\

     If $X$ is of finite height $h$, the second crystalline cohomology has a slope decomposition (\cite{I1}, \cite{IR}, \cite{Ka})
     $$ H^{2}_{cris}(X/W) = H^{2}_{cris}(X/W) _{[1-1/h]} \oplus H^{2}_{cris} (X/W) _{[1]} \oplus H^{2}_{cris}(X/W) _{[1+1/h]}.$$
     Considering the slope spectral sequence,
      $H^{2}_{cris}(X/W) _{[1-1/h]}$ is $H^{2}(X,W\mathcal{O}_{X})$ which is isomorphic to the Dieudonn\'{e} module of $\widehat{Br}_{X}$. The Dieudonn\'{e} module of a 1 dimensional smooth formal group of finite height $h$
      can be express as
      $$W[V,F]/(VF=p, \ F= V^{h-1}).$$
       Here $F$ is a Frobenius linear operator and $V$ is a Frobenius inverse linear operator. It follows that $H^{2}_{cris}(X/W) _{[1-1/h]}$ is a free $W$-module of rank $h$. For the cup product pairing, $H^{2}_{cris}(X/W) _{[1-1/h]}$ and $H^{2}_{cris}(X/W) _{[1+1/h]}$ are dual to each other and $H^{2}_{cris}(X/W) _{[1]}$ is unimodular. Therefore the rank of $H^{2}_{cris}(X/W)_{[1]}$ is $22-2h$. Considering the cycle map
     $$ c : NS(X) \otimes W \hookrightarrow H^{2}_{cris}(X/W) _{[1]}.$$
     we have $\rho (X) \leq 22-2h$. We call the orthogonal complement of the embedding
     $$c : NS(X) \otimes W \hookrightarrow H^{2}_{cris}(X/W)$$
     the crystalline transcendental lattice of $X$ and it is denoted by $T_{cris}(X)$.
     Since
     $$H^{2}_{cris}(X/W) _{[1-1/h]} \oplus
     H^{2}_{cris}(X/W) _{[1+1/h]}$$
      is a direct factor of $T_{cris}(X)$ and there is an isomorphism
     $H^{2}(X,\mathcal{O}_{X})/V \simeq H^{2}(X,\mathcal{O}_{X})$, we have a canonical projection
     $ T_{cris}(X) \to H^{2}(X,\mathcal{O}_{X})$. We denote the representation of $\Aut (X)$ on $T_{cris}$ by
     $$\chi _{cris,X} :  \Aut(X) \to O(T_{cris}(X)).$$
     By the Serre duality, the representation of $\Aut (X)$ on $H^{2}(X,\mathcal{O}_{X})$ is isomorphic to $\rho _{X}$
     and there is a compatible projection
     $$p_{cris, X} : \im \chi _{cris,X}  \to \im \rho _{X}.$$

     For any $\alpha \in \Aut(X)$, the characteristic polynomial of $\alpha ^{*} | H^{2}_{cris}(X/W)$ has integer coefficients (\cite{I0}, 3.7.3). Hence the characteristic polynomial of $\chi _{cirs,X}(\alpha)$ also has integer coefficients. \\

     For a K3 surface $X$ of finite height over $k$, there is a Neron-Severi group preserving lifting
     $\mathfrak{X} /W$ (\cite{NO}, \cite{LM}, \cite{J}). When $X_{\bar{K}} = \mathfrak{X} \otimes \bar{K} $ is a geometric generic fiber of $\mathfrak{X}/W$, the reduction map $NS(X_{\bar{K}}) \to NS(X)$ is isomorphic and the inclusion
     $\Aut (X_{\bar{K}}) \hookrightarrow \Aut (X)$ is of finite index. Using this fact, we have that $\im \rho _{X}$ and $\im \chi _{cris,X}$ are finite (\cite{J2}). Moreover If $n$ is the order of $\chi _{cris,X}(\alpha)$, $\phi(n)$ is at most the rank of $T_{cirs}(X)$. \\

     When $X$ is a K3 surface of arbitrary height over $k$, an automorphism $\alpha \in \Aut(X)$ is tame if $\alpha$ is of finite order and the order of $\alpha$ is not divisible by the base characteristic $p$. It is known that if $p$ is greater than 11, any automorphism of finite order of $X$ is tame (\cite{DK}, Theorem 2.1.).
     If $X$ is of finite height, we say an automorphism $\alpha \in \Aut(X)$ is weakly tame if the order of $\chi _{cris,X}(\alpha)$ is not divisible by $p$. A tame automorphism is weakly tame.
     We say $X$ is weakly tame if the order of $\im \chi _{cris,X}$ is not divisible by $p$. Since the rank of $T_{cris}(X)$ is less than 22, if $p \geq 23$, any K3 surface of finite height is weakly tame.
     \\

     Let $X$ be a K3 surface over $k$.
     We say an automorphism $\alpha \in \Aut (X)$ is liftable over $W$ if there is a scheme lifting $\mathfrak{X}/W$ of $X/k$ and a $W$-automorphism $\mathfrak{a} : \mathfrak{X} \to \mathfrak{X}$ such that
     the restriction of $\mathfrak{a}$ on the special fiber $\mathfrak{a} |X$ is equal to $\alpha$. In this paper, we prove that the following theorem.\\

     \textbf{Theorem 3.3.} Let $X$ be a K3 surface over $k$. If $X$ is of finite height and $\alpha \in \Aut(X)$ is weakly tame, $\alpha$ is liftable over $W$.
      If $X$ is supersingular and $\alpha \in \Aut(X)$ is non-symplectic tame, $\alpha$ is liftable over $W$. If $X$ is supersingular of Artin-invariant at least 2 and $\alpha \in \Aut(X)$ is symplectic tame, $\alpha$ is liftable over $W$.\\

      Also, for a weakly tame K3 surface, there exists a Neron-Severi group preserving lifting which lifts all the automorphisms.\\

      \textbf{Theorem 3.7.}  Let $X$ be a weakly tame K3 surface over $k$. Then there exists a Neron-Severi group preserving lifting $\mathfrak{X}/W$ of $X$ such that
      the reduction map $\Aut (\mathfrak{X} \otimes K) \to \Aut(X)$ is isomorphic.\\

      In a previous work (\cite {J2}), we prove that, if $k$ is an algebraic closure of a finite field, $X$ is of finite height and $N$ is the order of $\im \rho _{X}$, the rank of $T_{cris}(X)$ is a multiple of $\phi (N)$. Moreover if $X$ is weakly tame,
     $p_{cris,X}$ is isomorphic. Using Theorem 3.3, we can prove the same results holds over an arbitrary algebraically closed field.\\

     \textbf{Corollary 3.5.} Let $X$ be a K3 surface of finite height over $k$. If $\alpha$ is a weakly tame automorphism of $X$ and $\rho _{X}(\alpha) =id$, then $\chi _{X}(\alpha) =id$. If $X$ is weakly tame, the projection $p_{cris,X} : \im \chi _{X} \to \im \rho_{X}$ is an isomorphism.\\

     \textbf{Corollary 3.6.}  Let $X$ be a K3 surface of finite height over $k$. When $N$ is the order of $\im \rho _{X}$, the rank of $T_{cris}(X)$ is a multiple of $\phi (N)$.\\

     Due to this result, for a weakly tame K3 surface $X$, $\im \chi _{cris,X}$ is finite cyclic.\\

     When $\Sigma$ is a finite set of positive integers $\{ 13, 17, 19, 25, 27, 32, 33, 40, 44, 50, 66 \}$, it is known that, for $N \in \Sigma$, there is a unique
     complex algebraic K3 surface $X_{N}$ equipped with a purely non-symplectic automorphism of order $N$, $g_{N}$.
        A precise elliptic surface model of $X_{N}$ is known and $X_{N}$ is defined over $\Q$. Moreover, if $p$ does not divide $2N$, $(X_{N},g_{N})$ has a good reduction over an algebraic closure of a prime field $\F_{p}$.
     It is also known that if $k$ is an algebraically closed field of characteristic $p \neq 2,3$, there is a unique K3 surface over $k$ equipped with an automorphism of order 66 (\cite{Ke2}).
          Using Theorem 3.3, we prove the uniqueness of a K3 surface equipped with a purely non-symplectic automorphism of order $N \in \Sigma$ when $p$ does not divides $2N$. This unique K3 surface is the reduction of $X_{N}$ over a finite field.\\

     \textbf{Theorem 4.3.} Assume $N \in \Sigma$ and $p$ does not divide $2N$. Then there exists a unique K3 surface equipped with a purely non-symplectic automorphism of order $N$. This unique K3 surface has a model over a finite field.\\

        \vspace{0.6cm}
    {\bf Acknowledgment}\\
               This research was supported by Basic Science Research Program through the National Research Foundation of Korea(NRF) funded by the Ministry of Education, Science and Technology(2011-0011428).\\

     \section{Deformation of a K3 surface}
     In this section we review some results on the deformation of K3 surfaces over odd characteristic. For the detail we refer to
     \cite{Og1}, \cite{De1}, \cite{De2}.\\

     Let $k$ be an algebraically closed field of odd characteristic $p$. Let $W$ be the ring of Witt vectors of $k$ and $K$ be the fraction field of $W$. Assume $X$ is a K3 surface defined over $k$.
     The deformation space of $X$ over artin $W$-algebras is an affine smooth formal scheme of 20 dimension over $W$. Let
     $B= W[[t_{1} , \cdots , t_{20}]]$ and $\mathcal{S} = \Spf B$ be the deformation space of $X$. Let $\pi : \mathcal{X} \to \mathcal{S}$ be
     the universal family over $\mathcal{S}$.
     For $A$, an artin $W$-algebra whose residue field is isomorphic to $k$,  the set of isomorphic classes of deformation of $X$ over $A$ is $\mathcal{S}(A) = \Hom _{W,cont} (B , A)$. The second derham cohomology $H=H^{2}_{dr}(\mathcal{X}/\mathcal{S})$ is a vector bundle of rank 22 on $\mathcal{S}$. The vector bundle
     $H$ is equipped wiht the Hodge filtration
     $$H= Fil ^{0} \supset Fil^{1} \supset Fil^{2} \supset 0$$
     and the Gauss-Manin connection
     $$ \nabla : H \to H \otimes _{B} \Omega ^{1}_{\mathcal{S}/W}.$$
     Here $Fil ^{1}$ and $Fil ^{2}$ are vector bundles on $\mathcal{S}$ of rank 21 and of rank 1 respectively.
     The cup product gives a perfect paring $H \otimes H \to \mathcal{O}_{\mathcal{S}}$.
     The graded module of the filtration $gr^{i} = Fil ^{i}/Fil ^{i+1}$ is a vector bundle
     and there is a natural isomorphism
     $$ gr ^{i} \simeq R^{2-i}\pi _{*} \Omega ^{i} _{\mathcal{X}/\mathcal{S}}.$$
     With respect to the cup product paring,
     \begin{center}
     $(Fil ^{1}) ^{\bot} = Fil ^{2}$ and $(Fil ^{2}) ^{\bot} =Fil ^{1}$.
     \end{center}
     By the Griffith transversality, we have
     $$ \nabla (Fil ^{2}) \subset Fil ^{1} \otimes \Omega ^{1}_{\mathcal{S}/W}.$$
      This induces an $\mathcal{O}_{\mathcal{S}}-$linear morphism
      $$ gr^{2}\nabla : gr^{2} \to gr ^{1} \otimes \Omega ^{1}_{\mathcal{S}/W}.$$
      It is known that $gr ^{2} \nabla$ is an isomorphism (\cite{De1}, Proposition 2.4.).\\

      Any $f \in \mathcal{S}(W)$ is corresponding to a formal lifting of $X$ over $\Spf W$, $\mathfrak{X} _{f} \to \Spf W$.
      There is a canonical isomorphism
      $$\lambda _{f} : f^{*}H = H^{2}_{dr}(\mathfrak{X} _{f}/W) \simeq H^{2}_{cris}(X/W).$$

      Through $\lambda _{f}$, the Hodge filtration on $H^{2}_{dr} (\mathfrak{X} _{f}/W)$,
      $$H^{2}_{dr}(\mathfrak{X}_{F}/W) \supset f^{*} Fil ^{1} \supset f^{*} Fil ^{2}$$
      gives a filtration on $H^{2}_{cris}(X/W)$. Let $M^{i}_{f} = \lambda _{f}(f^{*} Fil ^{i})$ be a sub module of $H^{2}_{cris}(X/W)$.
      A line bundle $L$ on $X$ extends on $\mathfrak{X}_{f}$ if and only if the crystalline cycle class of $L$, $c(L) \in H^{2}_{cisr}(X/W)$
      is contained in $M^{1}_{f}$ (\cite{Og1}, Proposition 1.12). The rank 1 submodule $M^{2}_{f} \subset H^{2}_{cris}(X/W)$ satisfies the following conditions.
      \begin{enumerate}
      \item $M^{2} _{f} \otimes k = H^{0}(X, \Omega _{X/k}^{2})$ through the isomorphism $H^{2}_{cris}(X/W) \otimes k \simeq H^{2}_{dr}(X/k)$.
          \item $M^{2}_{f}$ is isotropic for the cup product pairing.
          \item For the canonical Frobenius morphism $\mathbf{F} : H^{2}_{cris}(X/W) \to H^{2}_{cris}(X/W)$,
          $\mathbf{F} (M^{2}_{f}) \subset p^{2}H^{2}_{cris}(X/W)$ and $\mathbf{F} (M^{2}_{f}) \not\subset p^{3}H^{2}_{cris}(X/W)$.
      \end{enumerate}
      Let us fix a basis $v_{1}, \cdots , v_{22}$ of $H^{2}_{cris}(X/W)$ satisfying $v_{1} \in H^{2}_{f}$ and $v_{2} , \cdots , v_{21} \in H^{1}_{f}$.
      Note that $\mathbf{F}(v_{1}) \in p^{2}H^{2}_{cris}(X/W) - p^{3}H^{2}_{cris}(X/W)$,
      $\mathbf{F}(V_{i}) \in pH^{2}_{cris}(X/W) - p^{2}H^{2}_{cris}(X/W)$ for $2 \leq i \leq 21$ and
      $\mathbf{F}(v_{22}) \not\in pH^{2}_{cris}(X/W)$.
      Since the cup product pairing is perfect and the orthogonal complement of $H^{2}_{f}$ is $H^{1}_{f}$, we may assume
      $v_{1} \cdot v_{22}=1$.  Assume $M$ is a submodule of $H^{2}_{cris}(X/W)$ of rank 1 satisfying the condition 1 and the condition 2 above. There exists a unique element
      $$v_{M} = v_{1} + \sum _{i=2} ^{22} a_{i} v_{i} \in M, \ (a_{i} \in W).$$
       We can easily check that $a_{i} \in pW$
       for $2 \leq i \leq 21$, $a_{22} \in p^{2}W$ and $a_{22}$ is uniquely determined by $a_{2}, \cdots , a_{21}$. Since $\mathbf{F}(M^{1}_{f}) \subset pH^{2}_{cris}(X/W)$, the condition 3 is automatically satisfied for $M$. Let $\mathcal{M}$ be the set of rank 1 submodules of $H^{2}_{cris}(X/W)$ satisfying the condition 1 and the condition 2. The correspondence $M \mapsto (v_{2}, \cdots , v_{21})$ gives a bijection between $\mathcal{M}$ and
      $(pW) ^{20}$, so we may regard $\mathcal{M} = (pW)^{20}$.
      When $g$ is another element in $\mathcal{S}(W)$ and $\mathfrak{X}_{g}/W$ is the corresponding lifting,
      $M^{2}_{g}$ is an element of $\mathcal{M}$. Let $\Phi : \mathcal{S}(W) \to \mathcal{M}$ be the function $g \mapsto M^{2}_{g}$.

      \begin{prop}[Local Torelli theorem]\label{propo}
      The function $\Phi : \mathcal{S}(W) \to \mathcal{M}$ is bijective.
      \end{prop}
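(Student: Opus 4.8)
The plan is to recognise $\Phi$ as the map on $W$-points of the crystalline period morphism and to show that this morphism is an isomorphism of formal $W$-schemes, the crucial input being that its derivative at the closed point is the Kodaira--Spencer map $gr^2\nabla$.

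First I would promote the assignment $f\mapsto M^2_f$ to a morphism of formal $W$-schemes. The set $\mathcal M$ is the set of $W$-points of a formal $W$-scheme $\check{\mathcal D}$ that is formally smooth over $W$ of relative dimension $20$: concretely $\check{\mathcal D}=\Spf W[[u_2,\dots,u_{21}]]$ via the coordinatisation $M\mapsto (a_2,\dots,a_{21})$ fixed above, or invariantly the formal completion at the $k$-point $[H^0(X,\Omega^2_{X/k})]$ of the quadric of isotropic lines in $\mathbb P(H^2_{cris}(X/W))$ for the cup product pairing (which is a smooth $W$-scheme, the pairing being perfect and $p$ odd, and contains that point since $Fil^2$ is isotropic). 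For an artin $W$-algebra $A$ with residue field $k$ and $f\in\mathcal S(A)$, crystalline deformation theory (\cite{Og1}) furnishes a comparison $\lambda_f\colon H^2_{dr}(\mathfrak X_f/A)\simeq H^2_{cris}(X/W)\otimes_W A$, functorial in $f$ and in $A$, and $M^2_f:=\lambda_f(f^*Fil^2)$ is a rank-one submodule satisfying conditions 1 and 2 (and hence, as noted in the excerpt, condition 3). This defines $P\colon\mathcal S\to\check{\mathcal D}$ with $P$ on $W$-points equal to $\Phi$.

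Next, $P$ corresponds to a local homomorphism $W[[u_2,\dots,u_{21}]]\to W[[t_1,\dots,t_{20}]]$ of regular local rings of the same Krull dimension; such a map is an isomorphism as soon as it is surjective on cotangent spaces, and since the image of $p$ is automatically $p$ this reduces to the derivative $dP\colon T_0\mathcal S\to T_{[0]}\check{\mathcal D}$ being an isomorphism, in which case $\Phi=P(W)$ is bijective. (One may instead argue by reduction modulo $p^{n+1}$ and induction on $n$: the reduction maps $\mathcal S(W/p^{n+2})\to\mathcal S(W/p^{n+1})$ and $\mathcal M_{n+2}\to\mathcal M_{n+1}$ have fibres that are torsors under $k^{20}$, the induced map of fibres is affine over $dP$, and so bijectivity propagates from level $n$ to level $n+1$ once $dP$ is an isomorphism.) Here $T_0\mathcal S$ is the space of first-order deformations of $X$, canonically $(\Omega^1_{\mathcal S/W}\otimes k)^\vee\cong H^1(X,T_X)$, while $T_{[0]}\check{\mathcal D}=\Hom_k(gr^2\otimes k,\ (Fil^1/Fil^2)\otimes k)=\Hom_k(gr^2\otimes k,gr^1\otimes k)$; and for a tangent vector $\partial$ corresponding to $\mathfrak X_\partial/k[\varepsilon]$, the line $\partial^*Fil^2$ is $(gr^2\otimes k)[\varepsilon]$ tilted to first order by $\varepsilon$ times the class modulo $Fil^2$ of $\nabla_\partial$ applied to a local generator, and carrying this over by $\lambda_\partial$ --- whose infinitesimal form is precisely the Gauss--Manin connection --- the tilt becomes $\varepsilon\cdot gr^2\nabla(\partial)$. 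Thus $dP$ is, up to these canonical identifications, the map $gr^2\nabla$ at the closed point, which is an isomorphism by \cite{De1}, Proposition 2.4; hence $P$ is an isomorphism of formal $W$-schemes and $\Phi$ is bijective.

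The step I expect to demand the most care is this last identification $dP=gr^2\nabla$, and behind it the compatibility of the comparison isomorphisms $\lambda_f$ with deformations: one must know that $\lambda_f$ modulo $p^{n+1}$ depends only on $f$ modulo $p^{n+1}$ and that its variation in $f$ is controlled by $\nabla$, so that the graded period map has symbol $gr^2\nabla$. The remaining ingredients --- formal smoothness of $\mathcal S$ and of $\check{\mathcal D}$ over $W$, unobstructedness of K3 deformations, and the torsor structure on the fibres of the mod-$p^{n+1}$ reduction maps --- are standard and assembled in \cite{Og1}, \cite{De1}, \cite{De2}.
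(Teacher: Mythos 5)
Your proposal is correct and rests on exactly the same two inputs as the paper --- the crystal structure of $H^{2}_{dr}(\mathcal{X}/\mathcal{S})$ (i.e.\ that the Gauss--Manin connection computes parallel transport for $R^{2}\pi_{cris,*}\mathcal{O}_{\mathcal{X}}$) and the isomorphy of $gr^{2}\nabla$ --- but it packages them differently. The paper never globalises $\Phi$ to a morphism of formal schemes: it fixes the base point $f$ with $f(t_i)=0$, uses $gr^{2}\nabla$ to choose the basis $v_1=\lambda_f(f^{*}x)$, $v_i=\lambda_f(f^{*}D_ix)$, and then writes $\Phi$ \emph{explicitly} in the coordinates $g(t_i)=pa_i$ via Deligne's Taylor expansion $\chi(g,f)(g^{*}y)=\sum_m\gamma_{m_1}(pa_1)\cdots\gamma_{m_{20}}(pa_{20})f^{*}(D^m y)$ (\cite{De2}, Lemme 1.1.2); the leading terms $h_1=1+p^2k_1$, $h_i=pa_i+p^2k_i$ exhibit an invertible Jacobian and Hensel's lemma finishes. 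Your argument is the coordinate-free version of this: your identification $dP=gr^{2}\nabla$ is precisely the paper's observation that the linear term of $h_i$ is $pa_i$, and your ``surjective on cotangent spaces $\Rightarrow$ isomorphism of regular local rings'' replaces Hensel. What the paper's route buys is that it entirely avoids the one genuinely delicate point in your main argument, namely that defining $P$ on \emph{all} artinian $W$-points requires the comparison $\lambda_f$ over thickenings whose augmentation ideal must carry divided powers; the divided powers $\gamma_m(pa_i)$ exist for free precisely because every $W$-point of $\mathcal{S}$ sends $t_i$ into $pW$. You should therefore either restrict the functor of points of $\mathcal{S}$ and $\check{\mathcal{D}}$ to square-zero extensions and the quotients $W/p^{n+1}$ (where the PD structures are canonical), which is exactly your parenthetical induction argument and is fully rigorous, or else cite the construction of the crystalline period map on the whole deformation space; as written, the sentence asserting $\lambda_f$ for an arbitrary artin $W$-algebra $A$ is stronger than what the references hand you directly. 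With that caveat your proof is complete, and arguably more transparent about why the statement deserves the name ``local Torelli.''
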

      \begin{proof}
      Let us fix a morphism $f: B \to W \in \mathcal{S}(W)$ such that $f(t_{i})=0$ for all $i$. 
      We choose $x$, a generator of  $Fil ^{2}$.
      Let us denote the differential
      $$\nabla (d/dt_{i}) : H \to H$$
      by $D_{i}$.
      Since $gr^{2}\nabla$ is isomorphic, we may choose a basis $v_{1} , \cdots , v_{22}$ of $H^{2}_{cris}(X/W)$ as above 
      such that 
      \begin{center}
      $v_{1} = \lambda _{f} (f^{*} x)$ and $v_{i} = \lambda _{f}(f^{*}D_{i}x)$ for $2 \leq i \leq 21$.
      \end{center}
      Assume $g \in \mathcal{S}(W)$ and $g(t_{i})=pa_{i} \in pW$. The $\mathcal{O}_{\mathcal{S}}$-module $H =H^{2}_{dr}(\mathcal{X}/ \mathcal{S})$ with the Gauss-Manin connection is an $F-$cyrstal in sense of $\cite{De2}$.
      Since $f \otimes k = g \otimes k$, there is an isomorphism
      $$ \chi (g,f) : H^{2}_{dr}(\mathfrak{X}_{g}/W) = g^{*}H \simeq f^{*} H = H^{2}_{dr}(\mathfrak{X}_{f}/W).$$
      Because the Gauss-Manin connection on $H$ is the connection associated to $R^{2} \pi _{cris, *} \mathcal{O}_{\mathcal{X}}$ (\cite{Be}, Proposition V 3.6.4),
      the isomorphism $\chi (f,g)$ makes the following diagram commutes
       $$ \begin{diagram}
     H^{2}_{dr}(\mathfrak{X}_{g}/W) & \rTo ^{\chi (g,f)} & H^{2}_{dr}(\mathfrak{X} _{f} /W)\\
      & \rdTo >{\lambda _{g}} &  \dTo _{\lambda _{f}} \\
     & & H^{2}_{cris}(X/W).
     \end{diagram} $$

      Precisely $\chi (g,f)$ is given as follow (\cite{De2}, Lemme 1.1.2.).
       When $m=(m_{1}, \cdots , m_{20}) \in \N ^{20}$ is a multi index,
      we denote $D^{m} = D_{1}^{m_{1}} \cdots D_{20} ^{m_{20}}$. Note that since $\nabla$ is an integrable connection, $D_{i}D_{j} = D_{j}D_{i}$ for any $i,j$. Let $\gamma _{i} : pW \to W$ be the divided power given by $\gamma _{i}(a) = a^{i}/i!$. Then
      $$\chi (g,f) (g^{*}y) = \sum _{m} \gamma_{m_{1}}(pa_{1})\cdots \gamma _{m_{20}}(pa_{20}) f^{*}(D^{m}y)$$
      for any $ y \in H$. The above summation is taken over all the multi index $m$.
      We set
      $$\lambda _{g} (g^{*} x) = \lambda _{f}(\chi (g,f)(f^{*}x)) = \sum _{i} h_{i}v_{i}.$$
      Here $h_{i} \in W[[a_{1}, \cdots , a_{20}]]$ is a formal series in $a_{i}$. In this case,
      $$h_{1} = 1 + p^{2} k_{1}$$
      and
      $$h_{i} = pa_{i} + p^{2}k_{i}\ (2 \leq i \leq 21)$$
     where all $k_{i} (1 \leq i \leq 21)$ are formal series which begins at degree 2 terms. Since $\lambda _{g} (g^{*}x)$ is a generator of $M^{2}_{g}$,
     $\Phi (g) = h_{1}^{-1} (h_{2} ,\cdots , h_{21}) \in (pW)^{20}$. By the Hensel lemma, the claim follows.     \end{proof}

     \section{Lifting of an automorphism}
     Assume $X$ is a K3 surface over $k$ and $\alpha$ is an automorphism of $X$. Let $\mathfrak{X}_{f}/W$ be the formal lifting of $X$ over $W$
     associated to $f \in \mathcal{S}(W)$.
     \begin{lem}[c.f. \cite{Og1}, Corollary 2.5.]
     An automorphism $\alpha \in \Aut (X)$ extends to $\mathfrak{X}_{f}/W$ if and only if $\alpha ^{*}| H^{2}_{cris}(X/W)$ preserves $M^{2}_{f}$.
     \end{lem}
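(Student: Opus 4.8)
The plan is to use the local Torelli theorem (Proposition \ref{propo}) to convert the existence of an extension of $\alpha$ into a fixed-point statement for a natural action of $\alpha$ on $\mathcal{S}(W)$, and then to transport that condition, through the bijection $\Phi$, into the assertion that $\alpha^{*}|H^{2}_{cris}(X/W)$ stabilizes $M^{2}_{f}$.

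First I would set up the action of $\Aut(X)$ on the deformation space. Since $X$ is the closed fiber of the universal family $\pi:\mathcal{X}\to\mathcal{S}$ and $\alpha\in\Aut(X)$, re-identifying the closed fiber of a deformation of $X$ through $\alpha$ defines an automorphism of the deformation functor, hence a continuous $W$-algebra automorphism of $B$ and, for every artin $W$-algebra $A$, an action of $\alpha$ on $\mathcal{S}(A)$; in particular on $\mathcal{S}(W)$. For $g\in\mathcal{S}(W)$ with associated formal lifting $\mathfrak{X}_{g}/W$, the element $\alpha\cdot g$ is represented by the same formal scheme with its closed fiber re-identified with $X$ via $\alpha$ --- the ``$\alpha$-twist'' of the deformation. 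As $\mathcal{S}(W)$ is precisely the set of isomorphism classes of deformations of $X$ over $W$ (this uses that a K3 surface has no infinitesimal automorphisms, $H^{0}(X,\mathcal{T}_{X/k})=0$, so that an automorphism of $\mathfrak{X}_{g}$ restricting to a given automorphism of the closed fiber is unique when it exists), $\alpha$ extends to $\mathfrak{X}_{f}/W$ if and only if $\mathfrak{X}_{f}$ is isomorphic, as a deformation, to its $\alpha$-twist; equivalently, if and only if $f$ is a fixed point of the $\alpha$-action on $\mathcal{S}(W)$.

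Next I would check that the $\alpha$-action on $\mathcal{S}(W)$ is intertwined by $\Phi$ with the linear action of $\alpha^{*}$ on $\mathcal{M}$. The de Rham bundle $H$ with its Hodge filtration and Gauss--Manin connection, and the comparison isomorphisms $\lambda_{g}:g^{*}H=H^{2}_{dr}(\mathfrak{X}_{g}/W)\simeq H^{2}_{cris}(X/W)$, are all functorial in the K3 surface; applying this functoriality to the tautological identification between $\mathfrak{X}_{g}$ and its $\alpha$-twist gives $\Phi(\alpha\cdot g)=(\alpha^{*})^{\pm1}(\Phi(g))$ for all $g$ (the precise variance is immaterial, since only $\alpha^{*}$-stability will matter). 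Since $\Phi$ is bijective by Proposition \ref{propo}, $f$ is fixed by the $\alpha$-action precisely when $M^{2}_{f}=\Phi(f)$ is stable under $\alpha^{*}$, which is the assertion. The ``only if'' part can also be seen directly: a lift $\mathfrak{a}:\mathfrak{X}_{f}\to\mathfrak{X}_{f}$ of $\alpha$ makes $\mathfrak{a}^{*}$ preserve the Hodge filtration on $H^{2}_{dr}(\mathfrak{X}_{f}/W)$, and transporting through $\lambda_{f}$ shows $\alpha^{*}$ preserves $M^{2}_{f}=\lambda_{f}(f^{*}Fil^{2})$. I expect the delicate point to be this compatibility package --- namely that the $\Aut(X)$-action on the deformation space matches, via the $\lambda_{\bullet}$, the linear $\Aut(X)$-action on $H^{2}_{cris}(X/W)$ and its filtrations --- rather than the essentially formal identification of ``$\alpha$ extends'' with ``$f$ is a fixed point''. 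This lemma is the crystalline counterpart of \cite{Og1}, Corollary 2.5, whose argument I would adapt to the present setup.
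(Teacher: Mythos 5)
Your proposal is correct and follows essentially the same route as the paper: the paper also pulls back $\mathfrak{X}_{f}$ through $\alpha$ (your ``$\alpha$-twist''), uses functoriality of the comparison isomorphisms $\lambda_{f},\lambda_{g}$ to show the twisted lifting has period $\alpha^{*}(M^{2}_{f})=M^{2}_{f}$, and then invokes the bijectivity of $\Phi$ (Proposition \ref{propo}) to conclude $f=g$, so that the twisting isomorphism is the desired extension of $\alpha$. Your fixed-point reformulation and the remark about $H^{0}(X,\mathcal{T}_{X/k})=0$ are just a more explicit packaging of the same argument.
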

     \begin{proof}
     The only if part is trivial. We assume $\alpha ^{*} (M^{2}_{f}) = M^{2}_{f}$. Let $\mathfrak{X}_{g} /W$ be the pull back of the lifting $\mathfrak{X} _{f}/W$ of $X/k$ through the isomorphism $\alpha$. Then there is a $W$-isomorphism $\mathfrak{a} : \mathfrak{X}_{g} \to \mathfrak{X}_{f}$ and
     we have a Cartesian diagram
      $$\begin{diagram}
     X & \rInto & \mathfrak{X}_{g}\\
     \dTo _{\alpha} & & \dTo _{\mathfrak{a}}\\
     X &  \rInto & \mathfrak{X} _{f}.
     \end{diagram}$$

     Since the isomorphism $\lambda _{f}$ and $\lambda _{g}$ are functorial, the following diagram commutes.
     $$ \begin{diagram}
     H^{2}_{dr}(\mathfrak{X}_{f}/W) & \rTo ^{\mathfrak{a}^{*}} & H^{2}_{dr}(\mathfrak{X} _{g} /W)\\
     \dTo _{\lambda _{f}} & & \dTo _{\lambda _{g}} \\
     H^{2}_{cris}(X/W) & \rTo ^{\alpha ^{*}} & H^{2}_{cris}(X/W).
     \end{diagram} $$
     Because $\mathfrak{a} ^{*} H^{0}(\mathfrak{X}_{f}, \Omega _{\mathfrak{X}_{f}/W} ^{1}) = H^{0}(\mathfrak{X}_{g},
     \Omega _{\mathfrak{X} _{g}/W} ^{2})$ and $\alpha ^{*} M^{2}_{f} = M^{2}_{f}$ by the assumption,
     $$M_{g}^{2} = \lambda _{g} (\mathfrak{a} ^{*} H^{0}(\mathfrak{X}_{f}, \Omega _{\mathfrak{X}_{f}/W} ^{1})) =
     \alpha ^{*} (\lambda _{f} (H^{0}(\mathfrak{X}_{f}, \Omega _{\mathfrak{X}_{f}/W} ^{1}))) =
     M^{2}_{f}.$$
     By Proposition \ref{propo}, $f = g$ and the automorphism $\alpha$ extends to
     an automorphism $\mathfrak{a}$ of $\mathfrak{X} _{f}$.
     \end{proof}
     \begin{rem}
     In the above lemma, if $\mathfrak{X} _{f}$ is algebrazable then $\alpha$ extends to the the algebraic model of $\mathfrak{X}_{f}$.
     \end{rem}

      \begin{thm}
      Let $X$ be a K3 surface over $k$. If $X$ is of finite height and $\alpha \in \Aut(X)$ is weakly tame, $\alpha$ is liftable over $W$.
      If $X$ is supersingular and $\alpha \in \Aut(X)$ is non-symplectic tame, $\alpha$ is liftable over $W$. If $X$ is supersingular of Artin-invariant at least 2 and $\alpha \in \Aut(X)$ is symplectic tame, $\alpha$ is liftable over $W$.

      \end{thm}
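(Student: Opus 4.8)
By Lemma~3.1 and the Remark following it, it suffices in each of the three cases to produce $M^{2}_{f}\in\mathcal{M}$ that is preserved by $\alpha^{*}|H^{2}_{cris}(X/W)$ and whose associated formal lift $\mathfrak{X}_{f}$ is algebraizable, i.e.\ (by the extension criterion recalled in Section~2) for which $c(L)\in M^{1}_{f}=(M^{2}_{f})^{\bot}$ for some ample $L\in NS(X)$. The plan is to construct $M^{2}_{f}$ as an isotropic $W$-line inside a suitable eigenspace of $\alpha^{*}$. Since $\alpha$ is weakly tame (resp.\ tame), $\alpha^{*}$ has finite order $n$ prime to $p$ on $T_{cris}(X)$ (resp.\ on $H^{2}_{cris}(X/W)$)—in the finite height case $\alpha$ itself need not have finite order, only $\alpha^{*}|T_{cris}(X)$. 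As $k$ is algebraically closed, $W$ contains a primitive $n$-th root of unity, so $\alpha^{*}$ is semisimple over $W$ and we decompose $T_{cris}(X)=\bigoplus_{\zeta}V_{\zeta}$, respectively $H^{2}_{cris}(X/W)=\bigoplus_{\zeta}H_{\zeta}$, into eigenspaces, with $\alpha^{*}$ acting by $\zeta\in\mu_{n}(W)$ on the $\zeta$-summand. Invariance of the cup product forces $V_{\zeta}\bot V_{\zeta'}$ (resp.\ $H_{\zeta}\bot H_{\zeta'}$) unless $\zeta\zeta'=1$; in particular an eigenspace for $\zeta$ with $\zeta^{2}=1$ is an orthogonal direct summand. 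The line $H^{0}(X,\Omega^{2}_{X/k})\subset H^{2}_{dr}(X/k)$ is $\alpha^{*}$-stable, with $\alpha^{*}$ acting through $\rho_{X}(\alpha)$, whose order divides $n$; so it lies in $V_{\zeta_{0}}\otimes k$ (resp.\ $H_{\zeta_{0}}\otimes k$), where $\zeta_{0}\in\mu_{n}(W)$ is the unique lift of $\rho_{X}(\alpha)$. Finally, for $X$ of finite height, condition~(3) applied to a generator of an arbitrary element of $\mathcal{M}$, together with the fact that $\mathbf{F}$ acts on the slope~$1$ part of $H^{2}_{cris}(X/W)$ as $p$ times an automorphism, forces that generator to reduce into the slope~$\neq 1$ part; since the latter is a unimodular orthogonal summand of $T_{cris}(X)$, the line $H^{0}(X,\Omega^{2}_{X/k})$ lies in a summand of $T_{cris}(X)\otimes k$ on which the cup product is nondegenerate.

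\emph{Finite height.} I take $M^{2}_{f}$ inside $V_{\zeta_{0}}\subset T_{cris}(X)$. If $\zeta_{0}\neq\pm 1$, then $V_{\zeta_{0}}$ pairs only with the distinct summand $V_{\zeta_{0}^{-1}}$, so it is totally isotropic and any $W$-line in it lifting $H^{0}(X,\Omega^{2}_{X/k})$ satisfies conditions (1) and (2), while (3) is automatic. If $\zeta_{0}=\pm 1$, then $\alpha^{*}$ acts on $H^{2}(X,W\mathcal{O}_{X})$—and dually on $H^{2}_{cris}(X/W)_{[1+1/h]}$—by the scalar $\zeta_{0}$: indeed $\alpha^{*}$ respects the slope decomposition and induces $\rho_{X}(\alpha)=\zeta_{0}$ on $H^{2}(X,W\mathcal{O}_{X})/V\cong H^{2}(X,\mathcal{O}_{X})$, the kernel of $\Aut(\widehat{Br}_{X})\to\Aut(H^{2}(X,\mathcal{O}_{X}))$ is pro-$p$, and $\alpha^{*}$ has order prime to $p$. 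Hence the slope~$\neq 1$ part lies in $V_{\zeta_{0}}$, so by the last remark above $H^{0}(X,\Omega^{2}_{X/k})$ is not in the radical of the cup product on $V_{\zeta_{0}}\otimes k$; one Hensel step (using $p$ odd) then perturbs a lift of $H^{0}(X,\Omega^{2}_{X/k})$ along the slope~$\neq 1$ directions of $V_{\zeta_{0}}$ to an isotropic generator, giving $M^{2}_{f}$. In either subcase $M^{2}_{f}\subset T_{cris}(X)=c(NS(X)\otimes W)^{\bot}$, so $c(L)\in M^{1}_{f}$ for every $L\in NS(X)$; thus $\mathfrak{X}_{f}$ is Neron-Severi preserving, hence projective, and $\alpha$ extends to it.

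\emph{Supersingular.} Here $T_{cris}(X)=0$ and I work with the eigenspaces $H_{\zeta}$ of $H^{2}_{cris}(X/W)$. Replacing an ample class by its sum over $\langle\alpha^{*}\rangle$ in $NS(X)\otimes\mathbb{Q}$ and clearing denominators, fix an $\alpha^{*}$-invariant ample $h_{0}\in NS(X)$, so $c(h_{0})\in H_{1}$. If $\alpha$ is non-symplectic, then $\zeta_{0}\neq 1$: for $\zeta_{0}\neq -1$, $H_{\zeta_{0}}$ is totally isotropic and any $W$-line lifting $H^{0}(X,\Omega^{2}_{X/k})$ inside it serves as $M^{2}_{f}$; for $\zeta_{0}=-1$, $H_{-1}$ is unimodular, and a Hensel perturbation as above gives an isotropic $M^{2}_{f}\subset H_{-1}$. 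In both subcases $M^{2}_{f}$ lies in an eigenspace orthogonal to $H_{1}\ni c(h_{0})$, so $c(h_{0})\in M^{1}_{f}$ and $\mathfrak{X}_{f}$ is algebraizable. If $\alpha$ is symplectic, then $\zeta_{0}=1$ and $M^{2}_{f}$ must lie in the unimodular summand $H_{1}$, which also contains $c(h_{0})$, so the orthogonality shortcut is lost. One can still write the $\alpha^{*}$-invariant lifts of $H^{0}(X,\Omega^{2}_{X/k})$ in $H_{1}$ as $v=w_{0}+px$ with $x\in H_{1}$; the conditions ``$v^{2}=0$'' and ``$c(h)\cdot v=0$'' become, after dividing by $p$, one quadric and one affine-linear equation in $x$, solvable simultaneously by Hensel's lemma provided the reductions of $w_{0}$ and $c(h)$ are linearly independent in $H_{1}\otimes k$, i.e.\ provided $p\nmid h^{2}$. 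Securing an $\alpha^{*}$-invariant ample $h$ with $p\nmid h^{2}$—equivalently, an algebraizable $\alpha^{*}$-invariant lift—is exactly where the hypothesis Artin invariant $\geq 2$ is used; for Artin invariant $1$ the $\alpha^{*}$-invariant part of $NS(X)$ can be too small. This last case—combining an $\alpha^{*}$-invariant isotropic lift of $H^{0}(X,\Omega^{2}_{X/k})$ inside $H_{1}$ with a compatible ample class—is the main obstacle; the finite height and non-symplectic supersingular cases are comparatively soft once the eigenspace decomposition and the slope position of $H^{0}(X,\Omega^{2}_{X/k})$ are established.
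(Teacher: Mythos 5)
Your overall strategy is the paper's: reduce via Lemma 3.1 and Proposition 2.1 to producing an $\alpha^{*}$-stable isotropic rank-one $M\in\mathcal{M}$ lifting $F^{2}H^{2}_{dr}(X/k)$ and orthogonal to an ample class, then locate $M$ in an eigenspace/slope summand and correct it by Hensel's lemma. Your finite-height and non-symplectic supersingular cases are essentially sound. In the finite-height case the paper is more direct: it puts $M$ inside the slope summand $H^{2}_{cris}(X/W)_{[1+1/h]}$, which is automatically $\alpha^{*}$-stable, isotropic, contains a lift of $F^{2}H^{2}_{dr}(X/k)$, and is orthogonal to $c(NS(X))\otimes W$; the eigenvector-lifting lemma (Lemma 3.4) applied to this summand handles all values of $\zeta_{0}$ at once, so your extra argument for $\zeta_{0}=\pm 1$ (that $\alpha^{*}$ acts as the scalar $\zeta_{0}$ on the Dieudonn\'e module because the kernel of $\Aut(\widehat{Br}_{X})\to\Aut(H^{2}(X,\mathcal{O}_{X}))$ is pro-$p$) is a correct but avoidable detour, and the pro-$p$ claim is itself only asserted.

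The genuine gap is in the supersingular symplectic case. You reduce everything to the existence of an $\alpha^{*}$-invariant ample class $h$ for which the reductions of $w_{0}$ and $c(h)$ are linearly independent in $H_{1}\otimes k$, and then simply declare that securing such an $h$ ``is exactly where the hypothesis Artin invariant $\geq 2$ is used,'' with no argument. That existence statement is the entire content of this case, so nothing has been proved. Moreover your ``i.e.\ provided $p\nmid h^{2}$'' is a non sequitur: $p\mid h^{2}$ says that $\pi(c(h))$ is isotropic, whereas what your Hensel step needs is that $\pi(c(h))$ does not lie on the line $F^{2}H^{2}_{dr}(X/k)$ spanned by $\pi(w_{0})$ --- these are independent conditions, and in fact your two-equation Hensel system is solvable whenever the linear independence holds, with no hypothesis on $h^{2}$ at all. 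The paper settles the needed input by citing Ogus (\cite{Og1}, Proposition 2.2) for the linear independence of a generator $x$ of $F^{2}H^{2}_{dr}(X/k)$ and $y=\pi(c(V))$ for a primitive $\alpha^{*}$-invariant ample $V$, and then treats \emph{both} possibilities $p\nmid c(V)^{2}$ and $p\mid c(V)^{2}$ (the latter via an explicit construction of an isotropic line inside $c(V)^{\bot}$ using a rank count on $L_{1}$). To repair your argument you would either need to prove the linear independence directly (this, not $p\nmid h^{2}$, is where the Artin-invariant hypothesis and Ogus's result enter) or import the paper's citation; as written, the third case is unproved.
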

      \begin{proof}
      By Proposition 2.2 and Lemma 3.1, in each case, it is enough to find $M \in \mathcal{M}$ and an ample line bundle $V$ of $X$ such that $\alpha ^{*} M =M$ and $M$ is orthogonal to $c(V) \in H^{2}_{cris}(X/W)$.\\

      Assume $X$ is of finite height $h$ and $\alpha$ is weakly tame. We fix an $F$-crystal decomposition
      $$H^{2}_{cris}(X/W) = H^{2}_{cris} (X/W)_{[1-1/h]} \oplus H^{2}_{cris}(X/W) _{[1]} \oplus H^{2}_{cris}(X/W) _{[1+1/h]}$$
      and an identification
      $$ H^{2}_{cris}(X/W) _{[1-1/h]} = W[F,V]/(FV=p, F= V^{h-1}).$$
      Let
      $$\pi : H^{2}_{cris}(X/W) \twoheadrightarrow H^{2}_{cris}(X/W) \otimes k \simeq H^{2}_{dr}(X/k)$$
      be the canonical projection. We denote the Hodge filtration on $H^{2}_{dr}(X/k)$ by $F^{\cdot} H^{2}_{dr}(X/k)$.
      Since
      $$\mathbf{F} (H^{2}_{cris}(X/W) _{[1]} \oplus H^{2}_{cris}(X/W) _{[1+1/h]}) \subset p H^{2}_{cris}(X/W)$$
      and
      $$H^{2}_{cris}(X/W) _{[1-1/h]}/V \simeq H^{2}(X,\mathcal{O}_{X}),$$
      we have
      $$ \pi (VH^{2}_{cris}(X/W) _{[1- 1/h]} \oplus H^{2}_{cris}(X/W) _{[1]} \oplus H^{2}_{cris}(X/W) _{[1+1/h]}) = F^{1} H^{2}_{dr}(X/k).$$
      Let $v \in H^{2}_{cris}(X/W) _{[1+1/h]}$ be the dual of $1 \in W[F,V]/(FV=p, F= V^{h-1})$ with respect to the base
      $1,V, \cdots , V^{h-1}$ of $H^{2}_{cris}(X/W) _{[1-1/h]}$. Then
      $$\pi (v) \in  (F^{1}H^{2}_{dr}(X/k))^{\bot}= F^{2}H^{2}_{dr}(X/k) \subset H^{2}_{dr}(X/k)$$
       and
      $$F^{2}H^{2}_{dr}(X/k) \subset H^{2}_{cris}(X/W) _{[1+1/h]} \otimes k.$$
      \begin{lem}
      Let $L$ be a finite free $W$-module and $\psi : L \to L$ be an automorphism of $L$ of finite order coprime to $p$.
      Then there is a basis of $L$ consisting of eigenvectors for $\psi$. If $v \in L \otimes k$ is an eigenvector of $\psi | (L \otimes k)$, there is an eigenvector $\hat{v} \in L$  such that $\hat{v} \otimes k =v$.
       \end{lem}
       \begin{proof}
       Let $N$ be the order of $\psi$. Then the polynomial $t^{N} -1 \in W[t]$ splits completely.
       Therefore when $L_{\zeta}$ is the eigenspace of $(L,\psi)$ for an eigenvalue $\zeta$, we have a decomposition
       $$L = \bigoplus _{\zeta} L_{\zeta}.$$
       The claim follows easily.
       \end{proof}
            Since $\alpha$ is weakly tame and $H^{2}_{cris}(X/W)_{[1+1/h]}$ is a direct factor of $T_{cris}(X)$, the order of
            $\alpha ^{*}| H^{2}_{cris}(X/W) _{[1+1/h]}$ is not divisible by $p$.
            Because $F^{2}H^{2}_{dr}(X/k)$ is one dimensional and is invariant for $\alpha ^{*}$, it follows that, by the above lemma, there is a rank 1 $\alpha ^{*}$-stable primitive submodule $M$ of $H^{2}_{cris}(X/W) _{[1+1/h]}$ such that $\pi (M) = F^{2}H^{2}_{dr}(X/k)$. Because $H^{2}_{cris}(X/W)_{[1+1/h]}$ is isotropic for the cup product, $M $ is an element of $\mathcal{M}$. Let $f \in \mathcal{S}(W)$ be the lifting of $X$ such that $M^{2}_{f}=M$.
            Then $M ^{2}_{f} \bot H^{2}_{cris}(X/W)_{[1]}$ and $c(NS(X)) \otimes W$ is a submodule of $H^{2}_{cris}(X/W)_{[1]}$, so
            all the line bundles of $X$ extend to $\mathfrak{X}_{f}$. In particular, $\mathfrak{X}_{f}$ is algebraizable and $\alpha$ is liftable over $W$. Note that $\mathfrak{X} _{f}$ is a Neron-Severi group preserving lifting of $X$.\\

            Now assume $X$ is supersingular and $\alpha$ is non-symplectic and tame. Let
            $$H^{2}_{cris}(X/W) = \bigoplus _{\zeta} L_{\zeta}$$
            be the eigenspace decomposition for $\alpha ^{*}| H^{2}_{cris}(X/W)$.
            We assume $F^{2}H^{2}_{dr}(X/k) \subset \pi (L_{\zeta _{0}})$ for some eigenvalue $\zeta _{0} \neq 1$. Then $\rho _{X}(\alpha) = \bar{\zeta _{0}}$, where
            $\bar{\zeta _{0}}$ is the reduction of $\zeta _{0}$ in $k$. If $\zeta _{0}\neq -1$, $L_{\zeta _{0}}$ is isotropic and there is a rank 1 primitive submodule $M \subset L_{\zeta _{0}}$ satisfying $\pi(M) = F^{2}H^{2}_{dr}(X/k)$. Then $M$ is an element of $\mathcal{M}$. If $\zeta _{0} =-1$, $\rho _{X}(\alpha) =-1$ and
            by the Serre duality, $\alpha ^{*} | (H^{2}_{dr}(X/k)/F^{1}) =-1$.
            We set $l_{-1} = \pi (L_{-1})$. The pairing on $l_{-1}$ is non-degenerate.
            The rank of $L_{-1}$ is at least 2 and
            $$l_{-1} \not\subset F^{1}H^{2}_{dr}(X/k) = (F^{2}H^{2}_{dr}(X/k))^{\bot}.$$
            Let us choose $0 \neq x \in F^{2}H^{2}_{dr}(X/k)$ and $y \in l_{-1}$ such that $ x \cdot y =1$. Let $u$ and $v$ are liftings of $x$ and $y$ in $L_{-1}$ satisfying
            $u \cdot v =1$. Since $u \cdot u$ is divisible by $p$, by the Hensel lemma, there is $a \in W$ such that $u + pav \in L_{-1}$ is isotropic.
            If $M$ is a submodule of $L_{-1}$ generated by $u+pav$, $M$ is an element of $\mathcal{M}$.
           Let $f \in \mathcal{S}(W)$ be the formal lifting of $X$ over $W$ such that $M = M_{f}^{2}$.
            Because $\alpha$ is of finite order, there an $\alpha ^{*}$-stable ample line bundle of $X$, $V$. Then $c(V) \in L_{1}$ and $c(V) \bot L_{\zeta _{0}} \supset M$. Therefore the formal lifting $\mathfrak{X} _{f}$ is algebraizable and $\alpha$ is liftable over $W$.    \\

            Assume $X$ is supersingular of Artin-invariant at least 2 and $\alpha$ is symplectic and tame.
            We set $l_{1} =\pi (L_{1})$. The pairing on $l_{1}$ is non-degenerate.
            By the assumption, $F^{2}H^{2}_{dr}(X/k) \subset l_{1}$. Let $V$ be a primitive $\alpha ^{*}$-ample bundle. Then $c(V) \in L_{1}$
            and $\pi (c(V)) \neq 0$. Let $x$ be a non-zero element of $F^{2}H^{2}_{dr}(X/k)$ and $y = \pi (c(V))$.
            By \cite{Og1}, Proposition 2.2,
            $x$ and $y$ are linearly independent.
            We denote the kernel of $l_{1} \twoheadrightarrow H^{2}(X,\mathcal{O}_{X})$ by $F^{1}l_{1}$. $F^{1}l_{1}$ is of codimension 1 in $l_{1}$. Note that $x,y \in F^{1}l_{1}$ and $F^{1}l_{1}$ is the orthogonal complement of $x$ in $l_{1}$.
            Let $c(V)^{\bot}$ be the orthogonal complement of $c(V)$ in $L_{1}$.
            Suppose the self intersection of $c(V)$ is not divisible by $p$.
            Since $x \cdot y =0$,
            $$F^{2}H^{2}_{dr}(X/k) \subset \pi (c(V)^{\bot})$$
             and
            $$\pi (c(V) ^{\bot}) \not\subset F^{1}H^{2}_{dr}(X/k).$$
            Therefore the rank of $c(V) ^{\bot}$ is at least 2 and as above there a rank 1 submodule $M$ of $c(V)^{\bot}$ such that $\pi(M) =F^{2}H^{2}_{dr}(X/k)$.
            The formal lifting corresponding to $M$ is algebrazable and
            $\alpha$ is liftable to the scheme lifting corresponding to $M$.
            Suppose the self intersection of $c(K)$ is divisible by $p$. Then $y$ is isotropic. Since $x$ and $y$ are linearly independent, there is $y \neq z \in F^{1}l_{1}$ such that $z \cdot y =1$. Hence the dimension of $F^{1}l_{1}$ is at least 3 and the rank of $L_{1}$ is at least 4. Let $v$ and $u$ are arbitrary liftings of $x$ and $z$ in $L_{1}$ respectively. Then $v \cdot u$ is divisible by $p$ and $c(K) \cdot u$ is a unit. We choose $w \in L_{1}$ such that $v \cdot w$ is a unit. We can find $a,b \in W$ satisfying
            $$v + a u , w + bu \in (c(K))^{\bot}.$$
             Since $v \cdot c(K)$ is divisible by $p$, $a \in pW$ and $(v+au) \cdot (w+bu)$ is a unit. Then inside $(c(K))^{\bot}$, we can find a rank 1 isotropic submodule $M$ such that $\pi(M) = F^{2}H^{2}_{dr}(X/k)$. The formal lifting associated to $M$ is algebraizable and $\alpha$ is liftable to the scheme lifting associated to $M$.
      \end{proof}

      \begin{cor}Let $X$ be a K3 surface of finite height over $k$. If $\alpha$ is a weakly tame automorphism of $X$ and $\rho _{X}(\alpha) =id$, then $\chi _{X}(\alpha) =id$. If $X$ is weakly tame, the projection $p_{cris,X} : \im \chi _{X} \to \im \rho_{X}$ is an isomorphism.
      \end{cor}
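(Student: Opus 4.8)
The plan is to deduce the second assertion formally from the first, and to prove the first by lifting $\alpha$ to characteristic zero and invoking the classical theory of symplectic automorphisms of complex K3 surfaces. Throughout I read $\chi_X$ as $\chi_{cris,X}$, since that is the invariant attached to $X/k$.

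First I would dispose of the reduction. By the discussion in the introduction, $\rho_X$ factors as $\Aut(X)\xrightarrow{\chi_{cris,X}}\im\chi_{cris,X}\xrightarrow{p_{cris,X}}\im\rho_X$ through the canonical projection $T_{cris}(X)\twoheadrightarrow H^2(X,\mathcal O_X)$; hence $p_{cris,X}$ is a well-defined surjective homomorphism, and it is an isomorphism if and only if $\rho_X(\alpha)=\mathrm{id}$ forces $\chi_{cris,X}(\alpha)=\mathrm{id}$ for every $\alpha\in\Aut(X)$. When $X$ is weakly tame, the order of $\chi_{cris,X}(\alpha)$ divides $|\im\chi_{cris,X}|$ and so is prime to $p$; thus any such $\alpha$ is itself weakly tame and the first assertion applies. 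So it suffices to prove the first assertion.

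For the first assertion, suppose $X$ is of finite height, $\alpha$ is weakly tame and $\rho_X(\alpha)=\mathrm{id}$. By Theorem 3.3 — more precisely by the construction carried out in its proof in the finite-height case — there is a Neron-Severi group preserving, algebraizable lifting $\mathfrak X=\mathfrak X_f/W$ of $X$ together with a $W$-automorphism $\mathfrak a\colon\mathfrak X\to\mathfrak X$ restricting to $\alpha$ on the special fibre. Let $X_{\bar K}=\mathfrak X\otimes\bar K$ be the geometric generic fibre and $\alpha_{\bar K}=\mathfrak a\otimes\bar K\in\Aut(X_{\bar K})$. I would then transport the two representations of $\alpha$ to $X_{\bar K}$: the smooth proper lifting gives $H^2_{cris}(X/W)\cong H^2_{dr}(\mathfrak X/W)$, and, after fixing an embedding $\bar K\hookrightarrow\C$, a chain of $\mathfrak a^*$-equivariant comparison isomorphisms $H^2_{dr}(\mathfrak X/W)\otimes_W\C\cong H^2_{dr}(X_{\bar K}/\bar K)\otimes_{\bar K}\C\cong H^2(X_{\C},\C)$. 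Since the lifting is Neron-Severi group preserving and cycle classes are compatible with these isomorphisms, $c(NS(X))\otimes\C$ corresponds to $NS(X_{\C})\otimes\C$, which moreover has the same rank; taking orthogonal complements yields an $\alpha^*$-equivariant isomorphism $T_{cris}(X)\otimes_W\C\cong T(X_{\C})\otimes\C$, so $\chi_{cris,X}(\alpha)$ and $\chi_{X_{\C}}(\alpha_{\bar K})$ become conjugate. Likewise $\pi_*\Omega^2_{\mathfrak X/W}$ is free of rank $1$ over $W$, with reduction $H^0(X,\Omega^2_{X/k})$ and generic fibre $H^0(X_{\C},\Omega^2)$, so $\alpha$ acts on all three by a single root of unity $\zeta\in W^*$; since the order of $\rho_X(\alpha)$ divides that of $\chi_{cris,X}(\alpha)$ it is prime to $p$, whence so is the order of $\zeta$, and reduction is injective on prime-to-$p$ roots of unity. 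Thus $\rho_X(\alpha)=\mathrm{id}$ forces $\zeta=1$, i.e.\ $\rho_{X_{\C}}(\alpha_{\bar K})=\mathrm{id}$, so $\alpha_{\bar K}$ is symplectic.

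Finally I would invoke the complex theory: by \cite{Ni1} the projection $p_{X_{\C}}\colon\im\chi_{X_{\C}}\to\im\rho_{X_{\C}}$ is an isomorphism (this rests on the global Torelli theorem and on the period of $X_{\C}$ lying in $T(X_{\C})\otimes\C$), so $\rho_{X_{\C}}(\alpha_{\bar K})=\mathrm{id}$ gives $\chi_{X_{\C}}(\alpha_{\bar K})=\mathrm{id}$. Transporting back through the equivariant isomorphism $T_{cris}(X)\otimes_W\C\cong T(X_{\C})\otimes\C$ gives $\chi_{cris,X}(\alpha)\otimes\C=\mathrm{id}$, and since $T_{cris}(X)$ embeds into $T_{cris}(X)\otimes\C$ we conclude $\chi_{cris,X}(\alpha)=\mathrm{id}$, as desired. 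I expect the only genuinely delicate point to be the $\mathfrak a^*$-equivariance of the crystalline/de Rham/Betti comparison isomorphisms for the family $\mathfrak X/W$, together with the fact that they carry the splitting into the Neron-Severi part and its orthogonal complement on one side to the analogous splitting on the other; granting this, everything else is either formal or a citation.
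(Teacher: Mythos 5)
Your proposal is correct and follows essentially the same route as the paper: lift $\alpha$ to a Neron--Severi group preserving lifting via Theorem 3.3, observe that the lifted automorphism is symplectic in characteristic zero, invoke Nikulin's result that symplectic implies trivial on the transcendental lattice, and transport back through the functorial isomorphism $H^{2}_{dr}(X_{K}/K)\simeq H^{2}_{cris}(X/W)\otimes K$. The only difference is cosmetic: the paper works directly over $K$ rather than passing to $\C$, and your justification that the scalar $\zeta$ has prime-to-$p$ order is most cleanly seen from the fact that $W^{*}$ contains no nontrivial $p$-power roots of unity since $K$ is absolutely unramified.
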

      \begin{proof}
      Since $\alpha$ is weakly tame, as in the proof of the above theorem, there is a Neron-Severi group preserving lifting $\mathfrak{X}/W$ of $X$ equipped with an automorphism $\mathfrak{a} : \mathfrak{X} \to \mathfrak{X}$ satisfying $\mathfrak{a} \otimes k =\alpha$. Let $X_{K}/K$ be the generic fiber of $\mathfrak{X}/W$. By the assumption $\mathfrak{a} ^{*} | H^{0}(X_{K}, \Omega _{X_{K}/K}) = id$. Since $K$ is of characteristic 0, $\mathfrak{a} ^{*}| T(X_{K}) =id$. But there is a functorial isomorphism
      $$ H^{2}_{dr}(X_{K}/K) \simeq H^{2}_{cris}(X/W) \otimes K,$$
      so $\alpha ^{*}| T_{cris}(X) =id$. The later part follows easily.
      \end{proof}

      \begin{cor}
      Let $X$ be a K3 surface of finite height over $k$. When $N$ is the order of $\im \rho _{X}$, the rank of $T_{cris}(X)$ is a multiple of $\phi (N)$.
      \end{cor}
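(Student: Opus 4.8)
The plan is to extract from $\Aut(X)$ a single weakly tame automorphism whose action on the holomorphic $2$-form has order $N$, to lift it together with a N\'eron--Severi preserving lift of $X$, and then to deduce the statement from the characteristic-zero result recalled in the introduction.

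First I would note that $N$ is automatically prime to $p$: $\im\rho_X$ is a finite subgroup of $k^{\times}$, hence cyclic of order prime to $p$. Choose $\alpha\in\Aut(X)$ with $\rho_X(\alpha)$ a primitive $N$-th root of unity. Since $\im\chi_{cris,X}$ is finite, $\chi_{cris,X}(\alpha)$ has finite order; write it as $p^{a}m$ with $p\nmid m$ and set $\beta:=\alpha^{p^{a}}$. Then $\chi_{cris,X}(\beta)=\chi_{cris,X}(\alpha)^{p^{a}}$ has order $m$, so $\beta$ is weakly tame, while $\rho_X(\beta)=\rho_X(\alpha)^{p^{a}}$ is still a primitive $N$-th root of unity because $p^{a}$ is invertible modulo $N$. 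Thus $\beta$ is a weakly tame automorphism with $\rho_X(\beta)$ of order $N$, and this is the only feature of $X$ that enters the rest of the argument.

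Next, since $X$ has finite height, Theorem 3.3 and its proof provide a N\'eron--Severi group preserving projective lifting $\mathfrak X/W$ of $X$ together with $\mathfrak b\in\Aut(\mathfrak X)$ restricting to $\beta$ on the closed fibre; put $X_{\bar K}=\mathfrak X\otimes\bar K$, with induced automorphism $\beta_{\bar K}$. Because $\mathfrak X$ is N\'eron--Severi preserving, $\rho(X_{\bar K})=\rho(X)$, so $\mathrm{rank}_{\Z}\,T(X_{\bar K})=22-\rho(X)=\mathrm{rank}_{W}\,T_{cris}(X)$, using also the functorial isomorphism $H^{2}_{dr}(\mathfrak X\otimes K/K)\simeq H^{2}_{cris}(X/W)\otimes K$ compatible with cycle classes. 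Moreover $\mathfrak b$ has finite order (by rigidity it is the unique lift of $\beta$, so $\mathfrak b^{\operatorname{ord}(\beta)}=\mathrm{id}$), hence acts on the free rank-one $W$-module $H^{0}(\mathfrak X,\Omega^{2}_{\mathfrak X/W})$ by a root of unity $u$; reducing modulo the maximal ideal of $W$ gives the action of $\beta$ on $H^{0}(X,\Omega^{2}_{X/k})$, which by Serre duality equals $\rho_X(\beta)^{-1}$ and so has order $N$, while base change to $\bar K$ gives $\rho_{X_{\bar K}}(\beta_{\bar K})=u^{\pm1}$. In particular the order of $\rho_{X_{\bar K}}(\beta_{\bar K})$ is divisible by $N$.

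Finally I would invoke over $\bar K$ the classical picture (equivalently, base change along an embedding $\bar K\hookrightarrow\C$, which alters neither the Picard number nor the transcendental lattice): $\im\rho_{X_{\bar K}}$ is finite cyclic of some order $N'$, and it contains $\rho_{X_{\bar K}}(\beta_{\bar K})$, so $N\mid N'$. By \cite{MO}, as recalled in the introduction, $T(X_{\bar K})$ is a free $\Z[\xi_{N'}]$-module, whence $\phi(N')\mid\mathrm{rank}_{\Z}\,T(X_{\bar K})$; combining this with the rank identity above and the elementary implication $N\mid N'\Rightarrow\phi(N)\mid\phi(N')$ yields $\phi(N)\mid\mathrm{rank}_{W}\,T_{cris}(X)$. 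I expect the main obstacle to be exactly the third paragraph: ensuring the lift is genuinely N\'eron--Severi preserving so that the $W$-rank of $T_{cris}(X)$ matches the $\Z$-rank of $T(X_{\bar K})$, and carefully tracking the action on the $2$-forms through the reduction and base-change maps; once this bookkeeping is done, the cyclotomic arithmetic is routine.
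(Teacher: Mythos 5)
Your proposal is correct and follows essentially the same route as the paper: replace the chosen automorphism by its $p^{a}$-th power to make it weakly tame without changing the order of its image under $\rho_X$, lift it to a N\'eron--Severi group preserving lifting via Theorem 3.3, identify the rank of $T_{cris}(X)$ with that of the transcendental lattice of the generic fibre, and conclude from the characteristic-zero statement of \cite{MO}. The only (harmless) cosmetic differences are that you verify $p\nmid N$ directly from $\im\rho_X\subset k^{\times}$ where the paper invokes Corollary 3.5, and that you pass through $\phi(N)\mid\phi(N')$ rather than applying the $\Z[\xi_N]$-module structure to the cyclic group generated by the lifted automorphism itself.
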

      \begin{proof}
      Let $\alpha$ be an automorphism of $X$ such that $\rho _{X}(\alpha)$ generates $\im \rho _{X}$.
      We assume the order of $\chi _{cris,X} (\alpha)$ is $p^{r}M$ where $M$ is a positive integer which is not divisible by $p$.
      Then $\alpha ^{p^{r}}$ is weakly tame and $M$ is equal to $N$ by the above corollary. Replacing $\alpha$ by $\alpha ^{p^{r}}$, we may assume $\alpha$ is weakly tame. Then there exist a Neron-Severi group preserving lifting $\mathfrak{X}/W$ and a lifting of $\alpha$,
      $\mathfrak{a} : \mathfrak{X} \to \mathfrak{X}$. Since the order of $\rho _{X_{K}} (\mathfrak{a})$ is $N$, the rank of $T(X_{K})$ is a multiple of $\phi (N)$. The rank of $T(X_{K})$ is equal to the rank of $T_{cris}(X)$ and the claim follows.

      \end{proof}

      \begin{thm}
      Let $X$ be a weakly tame K3 surface over $k$. There exists a Neron-Severi group preserving lifting $\mathfrak{X}/W$ of $X$ such that
      the reduction map $\Aut (\mathfrak{X} \otimes K) \to \Aut(X)$ is isomorphic.
      \end{thm}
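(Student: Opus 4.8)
The plan is to show that a single Neron-Severi group preserving lifting, obtained by applying the construction in the proof of Theorem~3.3 to one suitably chosen automorphism, already absorbs the whole group $\Aut(X)$, and then to check that the resulting reduction map is bijective. For the construction: $X$, being weakly tame, has finite height $h$, and by the observation following Corollary~3.6 the group $G:=\im\chi_{cris,X}$ is finite cyclic of order $N$ prime to $p$. Pick $\alpha\in\Aut(X)$ with $\chi_{cris,X}(\alpha)$ a generator of $G$; then $\alpha$ is weakly tame, so the first case of the proof of Theorem~3.3 produces a rank $1$ primitive submodule $M\subset H^{2}_{cris}(X/W)_{[1+1/h]}$ which is $\alpha^{*}$-stable, belongs to $\mathcal M$, and whose associated lifting $\mathfrak X_{f}/W$ (the one with $M^{2}_{f}=M$) is algebraizable and Neron-Severi group preserving. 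Write $\mathfrak X$ for its algebraic model.

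The key point for surjectivity is that $M$ is stable not only under $\alpha^{*}$ but under $\beta^{*}$ for every $\beta\in\Aut(X)$. Indeed $\beta^{*}$ preserves $c(NS(X))\otimes W$ by functoriality of the cycle class map, hence preserves its orthogonal complement $T_{cris}(X)$; and $M\subset T_{cris}(X)$ because $H^{2}_{cris}(X/W)_{[1-1/h]}\oplus H^{2}_{cris}(X/W)_{[1+1/h]}$ is a direct factor of $T_{cris}(X)$. Since $\beta^{*}|T_{cris}(X)=\chi_{cris,X}(\beta)$ lies in $G=\langle\chi_{cris,X}(\alpha)\rangle=\langle\alpha^{*}|T_{cris}(X)\rangle$, and $M$ is stable under this generator, $M$ is stable under all of $G$, hence under $\beta^{*}$. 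By Lemma~3.1 and Remark~3.2 each $\beta\in\Aut(X)$ therefore extends to an automorphism of $\mathfrak X$. Using in addition the standard fact that every automorphism of $\mathfrak X\otimes K$ extends uniquely to an automorphism of $\mathfrak X/W$ --- $\mathfrak X/W$ being a smooth proper model, with trivial relative canonical sheaf, of a K3 surface, so that Matsusaka--Mumford-type rigidity of relatively minimal models applies --- we identify $\Aut(\mathfrak X\otimes K)=\Aut(\mathfrak X)$ and conclude that the reduction map $\Aut(\mathfrak X\otimes K)\to\Aut(X)$ is surjective.

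For injectivity, suppose $\mathfrak b\in\Aut(\mathfrak X)$ restricts to the identity on the special fiber $X$. Then $\mathfrak b^{*}$ is the identity on $H^{2}_{cris}(X/W)$, hence on $H^{2}_{dr}(\mathfrak X/W)$ under the functorial isomorphism $\lambda_{f}$, hence $\mathfrak b\otimes\bar K$ acts trivially on $H^{2}_{dr}((\mathfrak X\otimes\bar K)/\bar K)$. As $\mathfrak X\otimes\bar K$ and $\mathfrak b\otimes\bar K$ are defined over a finitely generated, and hence $\C$-embeddable, subfield, faithfulness of the action of the automorphism group of a complex K3 surface on $H^{2}(-,\Z)$ (a consequence of the Torelli theorem) forces $\mathfrak b\otimes\bar K=\mathrm{id}$, and then $\mathfrak b=\mathrm{id}$ because $\mathfrak X\otimes K$ is schematically dense in $\mathfrak X$. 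Combined with surjectivity, this shows the reduction map is an isomorphism.

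The real content, beyond quoting Theorem~3.3, is organizational: recognizing that cyclicity of $\im\chi_{cris,X}$ reduces lifting the entire group to lifting a single generator of the representation on $T_{cris}(X)$, and carefully transporting automorphisms between $X$, $\mathfrak X$, and $\mathfrak X\otimes K$; I expect the subtlest point to be the characteristic-$0$ faithfulness needed for injectivity, where the Lefschetz-principle reduction to complex K3 surfaces is invoked.
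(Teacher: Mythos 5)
Your proposal is correct and follows essentially the same route as the paper: choose $\alpha$ whose image generates the cyclic group $\im\chi_{cris,X}$, take the $\alpha^{*}$-stable rank $1$ module $M\subset H^{2}_{cris}(X/W)_{[1+1/h]}$ from the proof of Theorem~3.3, and observe that since every $\beta^{*}$ acts on $T_{cris}(X)\supset M$ as a power of $\alpha^{*}$, the same lifting absorbs all of $\Aut(X)$. The only difference is that you also spell out the identification $\Aut(\mathfrak X\otimes K)=\Aut(\mathfrak X)$ and the injectivity of reduction, which the paper leaves implicit; those additions are sound.
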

      \begin{proof}
      Let $h$ be the height of $X$.
      Let $\alpha$ be a weakly tame automorphism of $X$ such that $\rho _{X}(\alpha)$ generates $\im \rho _{X}$. Then by Corollary 3.5,
      $\chi _{cris,X}(\alpha)$ generates $\im \chi _{cris,X}$. As in the proof of Theorem 3.3, we can find $M \in \mathcal{M}$ inside $H^{2}_{cris}(X/W) _{[1+1/h]}$ which is $\alpha ^{*}$-stable. Let $\mathfrak{X}/W$ be the lifting of $X$ corresponding to $M$.
      Then $\alpha$ is liftable to $\mathfrak{X}$. For any $\beta \in \Aut (X)$, $\chi _{cris ,X} (\beta) = \chi _{cris,X} (\alpha ^{i})$ for some integer $i$. Since $$H^{2}_{cris}(X/W) _{[1+1/h]} \subset T_{cris}(X),$$
       $M$ is stable for $\beta ^{*}$ and $\beta$ is liftable to $\mathfrak{X}$. Therefore
       $$\Aut (\mathfrak{X} \otimes K) = \Aut (\mathfrak{X}) \to \Aut(X)$$
       is surjective.

      \end{proof}

      \begin{rem}
      Assume $p$ is at least 5 and $X$ is a supersingular K3 surface of Artin-invariant 1 over $k$. Then $\im \rho _{X}$ is a cyclic group of order $p+1$ (\cite{J3}). Hence if $p>60$, $\phi (p+1)>21$ and there is an automorphism of $X$ which can not be lifted over characteristic 0. It is also known that for a supersingular K3 surface of Artin-invariant 1 over a field of characteristic 3, there is an automorphism which can not be lifted over characteristic 0 (\cite{EO}).
      We can ask whether for any supersingular K3 surface, there is an automorphism which can not lifted over characteristic 0.
      \end{rem}

      \section{Non-symplectic automorphisms}
      Let $k$ be an algebraically closed field of odd characteristic $p$ whose cardinality is equal to or less than the cardinality of the real numbers. Let $W$ be the ring of Witt-vectors of $k$ and $K$ be the fraction field of $W$. Let $\bar{K}$ be an algebraic closure of $K$. We fix an isomorphism $\bar{K} \simeq \C$. Let $\Sigma = \{13,17,19,25,27, 32,33,40,44,50,66 \}$ be a finite set of positive integers. The following is known.

      \begin{thm}[\cite{Ko}, \cite{MO} , \cite{OZ}, \cite{Ta}] If $N \in \Sigma$, there exists a unique complex algebraic K3 surface $X_{N}$ equipped with a purely non-symplectic automorphism of order $N$, $g_{N}$ up to isomorphism. $X_{N}$ has a model over $\Q$ and if a prime number $p$ does not divide $2N$, $(X_{N},g_{N})$ has a good reduction $(X_{N,p},g_{N,p})$ over an algebraic closure of a prime field of characteristic $p$.

      \end{thm}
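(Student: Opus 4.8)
The statement splits into three parts of quite different character, and I would carry them out in the order: uniqueness of $(X_N,g_N)$ over $\C$, existence together with a model over $\Q$, and good reduction at primes $p\nmid 2N$. \textbf{Uniqueness over $\C$.} Let $(X,g)$ be any complex algebraic K3 surface carrying a purely non-symplectic automorphism of order $N\in\Sigma$. Then $\rho_X(g)=\xi_N$ is a primitive $N$-th root of unity, so $T(X)$ is a free $\Z[\xi_N]$-module and $\rank T(X)$ is a multiple of $\phi(N)$; since $\rank T(X)\le 21$ and $2\phi(N)>21$ for every $N\in\Sigma$, we get $\rank T(X)=\phi(N)$, i.e. $T(X)$ is a $\Z[\xi_N]$-module of rank one. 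The first main step is lattice rigidity: from the signature $(2,\phi(N)-2)$, the constraint that $NS(X)=T(X)^{\bot}$ inside the K3 lattice $U^{3}\oplus E_{8}^{2}$ must carry the induced order-$N$ isometry with an invariant sublattice of small rank and controlled discriminant form, and Nikulin's uniqueness theorems for indefinite lattices and for primitive embeddings, one shows that $T(X)$ with its $\Z[\xi_N]$-structure, hence the pair $\bigl(T(X)\hookrightarrow U^{3}\oplus E_{8}^{2}\bigr)$, is determined up to isometry. The second step is period rigidity: $g^{*}$ acts on $T(X)\otimes\C$ with each primitive $N$-th root of unity as a simple eigenvalue, so the eigenlines are unique; the line $H^{2,0}(X)$ must be one of them, and the Hodge--Riemann relations together with the fixed choice of $\xi_N$ single out exactly one. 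Therefore the period point of $X$ is uniquely determined, and the Torelli theorem for complex K3 surfaces yields $X\cong X'$ for any two such surfaces, by an isomorphism conjugating $g^{*}$ to a power of $(g')^{*}$; matching the eigenvalue on $H^{2,0}$ pins down the power, so $(X,g)\cong(X_N,g_N)$.

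\textbf{Existence and the model over $\Q$.} For each $N\in\Sigma$ I would exhibit an explicit Weierstrass elliptic K3 surface $Y_N/\Q$ with an automorphism acting by coordinate scalings, of the shape $(x,y,t)\mapsto(\zeta^{a}x,\zeta^{b}y,\zeta^{c}t)$ for a primitive $N$-th root $\zeta$ and suitable exponents, verify by the usual Euler-number and fibre-type count that $Y_N$ is a K3 surface, and compute the action on $H^{0}(Y_N,\Omega^{2})$ through the relative form $dt\wedge dx/y$ to see that the automorphism has order $N$ and is purely non-symplectic. By the uniqueness just established, $(Y_N)_{\C}\cong X_N$, which gives both existence of $X_N$ and a model over $\Q$.

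\textbf{Good reduction.} Viewing the chosen Weierstrass model over $\Z[1/2N]$, its discriminant is, up to a unit, supported on primes dividing $2N$. Hence for $p\nmid 2N$ the minimal resolution of the Weierstrass model over $W$ is smooth and proper over $W$ with the same configuration of singular fibres, so its special fibre is again a K3 surface, and the scaling automorphism extends; this produces $(X_{N,p},g_{N,p})$ over an algebraic closure of the prime field.

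The main obstacle is the lattice-rigidity step. It is delicate for the composite values $N\in\{33,40,44,66\}$, where the arithmetic of $\Z[\xi_N]$ (ideal classes and ramification of $2$ and of the odd prime divisors of $N$) and the possible orthogonal summands in $NS(X)$ must be analysed case by case, and also for $N\in\{13,17,19\}$, where $\phi(N)<20$ leaves extra room in $NS(X)$ that has to be excluded using the purely non-symplectic hypothesis on the whole cyclic group generated by $g$; once these cases are settled, matching the period up to the Torelli theorem is routine.
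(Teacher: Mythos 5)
The paper does not prove this statement at all: Theorem 4.1 is quoted verbatim from the cited references (Kondo, Machida--Oguiso, Oguiso--Zhang, Taki), so there is no in-paper argument to compare yours against. Measured against what those references actually do, your outline is the right strategy and your first reduction is correct and complete: since $\rho_X(g)$ is a primitive $N$-th root of unity, $T(X)$ is a free $\Z[\xi_N]$-module, and $\rank T(X)\le 21$ together with $2\phi(N)\ge 24>21$ for all $N\in\Sigma$ forces $\rank T(X)=\phi(N)$, i.e.\ $T(X)\cong\Z[\xi_N]$ as a module. The period-rigidity step and the Torelli argument (simple eigenvalues on $T(X)\otimes\C$, the $H^{2,0}$ eigenline pinned down by $\rho_X(g)=\xi_N$, correction by $\pm1$ and reflections in $NS(X)$ to preserve the ample cone) are also the standard and correct way to finish once the lattice data are rigid. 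The existence-plus-$\Q$-model and good-reduction parts via explicit Weierstrass equations with coordinate scalings are likewise how the literature proceeds.

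The genuine gap is that the two steps carrying all the mathematical content are asserted rather than proved. First, the lattice rigidity: you must show that the isometry class of $T(X)$ as a $\Z[\xi_N]$-lattice, the isometry class of $NS(X)=T(X)^{\bot}$ with its induced order-$N$ action, and the primitive embedding $T(X)\hookrightarrow U^{3}\oplus E_{8}^{2}$ are all unique. This is exactly what distinguishes the eleven values in $\Sigma$ from other $N$ with $\phi(N)\le 21$, and it genuinely requires the case-by-case arithmetic of $\Z[\xi_N]$ (class numbers, ramification, trace forms) that you defer; note also that for $N\in\{13,17,19\}$, where $NS(X)$ has rank $22-\phi(N)\ge 4$, you must additionally rule out nontrivial actions of $g$ on $NS(X)$ compatible with pure non-symplecticity, which is not automatic. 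Second, existence: without writing down the actual equations and verifying the singular-fibre data, the order of the scaling automorphism, and the discriminant of the integral model, the existence, the $\Q$-model, and the good reduction away from $2N$ remain unestablished. As a citation of known results (which is how the paper uses it) your outline is fine; as a proof it is a correct skeleton with the skeleton's load-bearing bones missing.
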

     In the case of $N=66$, the following result over positive characteristic is also known.
     \begin{thm}[\cite{Ke2}] If the characteristic of $k$ is not 2 or 3, there is a unique K3 surface equipped with an automorphism of order 66.
     \end{thm}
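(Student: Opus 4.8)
The result has two parts — existence together with descent to a finite field, and uniqueness — and only the second is substantial. For existence: since $p\nmid 2N$, Theorem~4.1 gives that $(X_N,g_N)$ has good reduction $(X_{N,p},g_{N,p})$ over $\bar{\F}_p$, and spreading the pair out over a finitely generated subring of $\bar{\F}_p$ shows it already descends to a finite field $\F_q$. Base-changing to $k$ gives a pair $(X_0,g_0)$; since $p\nmid N$, the scalar by which $g_0$ acts on $H^{0}(X_0,\Omega^{2}_{X_0/k})$ is the reduction of a primitive $N$-th root of unity, hence still of exact order $N$, so $g_0$ is purely non-symplectic of order $N$. (For $N=66$ the stronger Theorem~4.2 even covers $p=11$.) So the real point is to show that any pair $(Y,g)$ over $k$, with $g$ purely non-symplectic of order $N$, is isomorphic to $(X_0,g_0)$.

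I would first lift $g$ to characteristic $0$ via Theorem~3.3. Since $p\nmid N$, $g$ is tame; if $Y$ has finite height then $\chi_{cris,Y}(g)$ has order dividing $N$, so $g$ is weakly tame, while if $Y$ is supersingular then $g$ is non-symplectic (it is purely non-symplectic with $N>1$) — so in either case Theorem~3.3 produces a scheme lifting $\mathfrak{Y}/W$ and an automorphism $\mathfrak{g}$ of $\mathfrak{Y}$ reducing to $g$. Let $(Y_K,g_K)$ be the generic fibre. Because the reduction map $\Aut(\mathfrak{Y})\to\Aut(Y)$ is injective (it is injective already on the faithful action on $H^{2}$) and $\mu_{N}(W)\to\mu_{N}(k)$ is bijective, $g_K$ has order exactly $N$ and acts on $H^{0}$ of the relative $\Omega^{2}$ by a scalar of order $N$. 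Fixing $\bar{K}\simeq\C$, the pair $(Y_{\bar{K}},g_{\bar{K}})$ is then a complex projective K3 surface with a purely non-symplectic automorphism of order $N$, so Theorem~4.1 yields an isomorphism of pairs $(Y_{\bar{K}},g_{\bar{K}})\simeq(X_N,g_N)\otimes\C$.

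The crux is to push this isomorphism down to the special fibres, i.e. to conclude $(Y,g)\simeq(X_0,g_0)$ over $k$. The isomorphism over $\bar{K}$ is already defined over a finite extension $K'/K$; let $W'$ be its ring of integers, still with residue field $k$ since $k$ is algebraically closed. Then $\mathfrak{Y}\otimes_{W}W'$ is a smooth proper model over $W'$ of $Y_{K'}$ with special fibre $Y$, while the good model of $X_N$ over $W(\bar{\F}_p)$ base-changes to a smooth proper model $\widetilde{\mathfrak{X}}/W'$ of $X_N\otimes K'$ with special fibre $X_0$, and both carry the extended automorphisms $\mathfrak{g}$ and $g_N$. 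Invoking the uniqueness, up to isomorphism, of the smooth proper model of a K3 surface over a complete discrete valuation ring, the two $W'$-models are isomorphic, and the isomorphism — being determined by its restriction to the generic fibre — is automatically equivariant for $\mathfrak{g}$ and $g_N$. Restricting to special fibres gives $(Y,g)\simeq(X_0,g_0)$, which is the reduction of $X_N$ and has a model over $\F_q$.

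The main obstacle is this last model comparison. Two points need care: that a K3 surface over a complete discrete valuation ring really does admit a unique smooth proper model — this rests on K3 surfaces being minimal with trivial relative canonical sheaf, and it must be applied over the possibly ramified ring $W'$, not just over $W$ — and that the comparison can be taken compatibly with the automorphism, which is why it matters that an automorphism of a flat proper model is pinned down by its action on the generic fibre and that $\Aut(\mathfrak{Y})\to\Aut(Y)$ is injective. The remaining ingredients — the lifting, and the stability of the order and of the non-symplectic property under reduction (all reducing to $p\nmid N$ and the bijectivity of reduction on $\mu_N$), together with the appeal to the complex uniqueness Theorem~4.1 — are comparatively routine. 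An alternative to the model-uniqueness input would be to match $H^{2}_{cris}(Y/W)$ with $H^{2}_{cris}(X_0/W)$ through the $p$-adic comparison isomorphisms with the common complex fibre $X_N$ and then appeal to a Torelli-type statement.
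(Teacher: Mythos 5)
You are proving Theorem 4.2, which the paper does not prove at all: it is quoted from Keum's paper \cite{Ke2}, and the reason it is quoted separately from Theorems 4.1 and 4.3 is precisely that it covers the case your method excludes. Your argument funnels everything through the lifting result Theorem 3.3, which for a supersingular surface requires the automorphism to be \emph{tame} and for a finite-height surface requires it to be weakly tame. For an automorphism of order $66 = 2\cdot 3\cdot 11$ in characteristic $p=11$ this hypothesis fails: $g$ is wild, and no generator of the cyclic group it generates is tame, so Theorem 3.3 produces no lift. Your proof therefore only establishes the statement for $p\nmid 2\cdot 66$, i.e.\ $p\neq 2,3,11$, whereas Theorem 4.2 asserts uniqueness for every $p\neq 2,3$; the paper flags this explicitly (``Note that the above result covers a wild case of characteristic 11''). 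This cannot be repaired inside the framework of Section 3 --- Keum's argument is a direct classification via elliptic fibrations and lattice theory, not a lifting argument. A second, smaller gap: Theorem 4.2 concerns \emph{any} automorphism of order 66, but you immediately restrict to purely non-symplectic ones; one must first show that order 66 forces $\rho_X(g)$ to have order 66 (e.g.\ via the bound on orders of symplectic automorphisms), and in characteristic 11 even that preliminary step cannot simply be imported from the complex case.

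That said, the tame part of your argument is essentially the paper's proof of Theorem 4.3, and in one respect you are more careful than the paper: the proof of Theorem 4.3 passes from $\mathfrak{X}\otimes\C\simeq X_N$ to $X\simeq X_{N,p}\otimes k$ in a single sentence, whereas you correctly identify that one must descend the isomorphism to a finite extension $K'$ of $K$ and compare the two smooth proper models over its ring of integers (uniqueness of smooth non-ruled models in the style of Matsusaka--Mumford), keeping track of equivariance via injectivity of $\Aut(\mathfrak{Y})\to\Aut(Y)$. Relabelled as a proof of Theorem 4.3, your argument is essentially correct; as a proof of Theorem 4.2 it misses exactly the case that makes Theorem 4.2 worth stating.
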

     Note that the above result covers a wild case of characteristic 11.\\

     Using Theorem 3.3, we prove the uniqueness of a K3 surface over $k$ equipped with a purely non-symplectic tame automorphism of order $N$ for $N \in \Sigma$.

     \begin{thm}
     Assume $N \in \Sigma$ and $p$ does not divide $2N$. Then there exists a unique K3 surface equipped with a purely non-symplectic automorphism of order $N$ up to isomorphism. This unique K3 surface has a model over a finite field.
     \end{thm}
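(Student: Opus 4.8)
The plan is to reduce everything to the complex uniqueness of $(X_N,g_N)$ by means of Theorem 3.3 and then transport back to characteristic $p$. Concretely, the goal is to prove that any K3 surface $Y$ over $k$ equipped with a purely non-symplectic automorphism $h$ of order $N$ is isomorphic, as a pair, to the base change of $(X_{N,p},g_{N,p})$ to $k$. Existence then follows from Theorem 4.1, and the model over a finite field follows because $(X_{N,p},g_{N,p})$ is already defined over $\bar{\F}_p=\bigcup_q\F_q$, hence descends to some $\F_q$.

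So let $(Y,h)$ be as above. Since $p$ does not divide $N$ the automorphism $h$ is tame; since $\rho_Y(h)$ has order $N>1$ it is non-symplectic; and if $Y$ has finite height it is in particular weakly tame. In every case Theorem 3.3 applies, so $h$ lifts to an automorphism $\mathfrak h$ of a projective lifting $\mathfrak Y/W$ of $Y$. The first step is to check that, identifying an algebraic closure $\bar K$ of $K$ with $\C$, the pair $(\mathfrak Y_{\bar K},\mathfrak h_{\bar K})$ is again a complex K3 surface with a purely non-symplectic automorphism of order $N$. The order of $\mathfrak h$ divides $N$: the automorphism $\mathfrak h^N$ lifts $\mathrm{id}_Y$, and the specialization map $\Aut(\mathfrak Y/W)\to\Aut(Y)$ is injective because the relative automorphism scheme of $\mathfrak Y/W$ is unramified over $W$ — a K3 surface has no nonzero global vector field — so its diagonal is an open immersion and a $W$-point agreeing with the identity at the closed point of $\Spec W$ equals the identity on all of $\Spec W$. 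On the other hand $\mathfrak h$ acts on the rank-one free $W$-module $H^0(\mathfrak Y,\Omega^2_{\mathfrak Y/W})$ through a unit of $W$ whose reduction modulo $p$ is $\rho_Y(h)$, a primitive $N$-th root of unity; as the only root of unity of $W$ reducing to a given root of unity of $k$ of order prime to $p$ is the Teichm\"uller representative, this unit has order exactly $N$. Hence $\mathfrak h$ has order exactly $N$ and acts on $H^0(\mathfrak Y_{\bar K},\Omega^2)$ by a primitive $N$-th root of unity, i.e.\ $\mathfrak h_{\bar K}$ is purely non-symplectic of order $N$. By Theorem 4.1, $(\mathfrak Y_{\bar K},\mathfrak h_{\bar K})\cong (X_N,g_N)_{\bar K}$.

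It remains to recognize $Y$ as the reduction of $X_N$. Choose a smooth projective model $\mathcal X_N$ of $X_N$ over $\Z[1/2N]$ together with $g_N$, with fibre over $p$ equal to $(X_{N,p},g_{N,p})$; base changing to $W$ yields a projective lifting $\mathcal X_{N,W}/W$ of $(X_{N,p})\otimes k$ carrying an automorphism of order $N$ and with geometric generic fibre $(X_N,g_N)_{\bar K}$. Thus $\mathfrak Y/W$ and $\mathcal X_{N,W}/W$ are two models, each equipped with an automorphism of order $N$, whose geometric generic fibres are identified compatibly with the automorphisms. I would pass to a finite (necessarily totally ramified) extension $R/W$ with fraction field $L$ over which this identification and the automorphisms are defined, choose an $\mathfrak h$-invariant ample class $\lambda$ on the common generic fibre (for instance the average of the $\langle\mathfrak h\rangle$-orbit of an ample class), extend it to a relatively ample class on each of $\mathfrak Y_R$ and $\mathcal X_{N,R}$, and apply the Matsusaka--Mumford theorem — equivalently, separatedness of the moduli stack of polarized K3 surfaces over $\Z[1/2]$ — to obtain an isomorphism $\mathfrak Y_R\cong\mathcal X_{N,R}$ extending a chosen generic isomorphism that respects the automorphisms. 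Because the generic fibre is dense and the target is separated over $R$, the extended isomorphism still respects the automorphisms, so restricting to special fibres gives $(Y,h)\cong(X_{N,p},g_{N,p})\otimes k$.

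The delicate point, and the main obstacle, is that a single polarization of the common generic fibre extends to a relatively ample class on both models; this is transparent precisely when the relevant liftings are N\'eron--Severi preserving. If $Y$ has finite height, the lifting produced by Theorem 3.3 is N\'eron--Severi preserving, so $NS(Y)\cong NS(X_N)$ and an ample class on the generic fibre restricts to an ample class on $Y$ (a $(-2)$-class effective on $Y$ and extending to the lift is effective on the generic fibre, by Riemann--Roch), and comparing two finite-height examples this way the argument goes through cleanly. If $Y$ is supersingular no lifting can be N\'eron--Severi preserving — its generic fibre would have Picard number $22$ while $X_N$ has Picard number $22-\phi(N)$ — and one should instead observe that the crystalline realization of $Y$, with the $h$-action and the cycle map, is the reduction of that of $X_N$ and hence independent of $Y$, and conclude by Ogus's crystalline Torelli theorem for supersingular K3 surfaces. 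Making this case analysis uniform — in particular ruling out that a finite-height and a supersingular example occur for one and the same $p$ — together with the careful handling of the polarization extension, is where the real work lies.
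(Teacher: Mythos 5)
Your proposal follows the same route as the paper: lift $(Y,h)$ over $W$ via Theorem 3.3, check that the lifted automorphism is again purely non-symplectic of order $N$, identify the geometric generic fibre with $(X_N,g_N)$ by Theorem 4.1, and descend the resulting identification to the special fibres. The paper's own proof is in fact terser than yours --- it ends with the bare assertion that ``it follows that $X$ is isomorphic to $X_{N,p}\otimes k$'' --- so the details you supply (injectivity of $\Aut(\mathfrak{Y}/W)\to\Aut(Y)$ from the absence of global vector fields, the Teichm\"uller argument pinning down the order of the lift, Matsusaka--Mumford for the specialization step) are precisely the ones the paper leaves implicit.

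The point at which you stop, in your final paragraph, is not actually an obstacle. You do not need a single polarization of the common generic fibre to extend to a relatively ample class on both models, and hence you need no case division between finite height and supersingular: the birational form of the Matsusaka--Mumford theorem requires only that each model be projective over the base (each polarized by its own relatively ample bundle) with smooth non-ruled special fibre, and it concludes that a birational equivalence of generic fibres specializes to a birational map of special fibres --- which, between smooth K3 surfaces, is an isomorphism. Every lifting furnished by Theorem 3.3, in the finite-height case and in both supersingular cases, is an algebraizable projective scheme over $W$, so this applies uniformly; there is no need for Ogus's crystalline Torelli theorem, and no need to rule out the coexistence of a finite-height and a supersingular example for one and the same $p$, since uniqueness, once established, forces every example to be isomorphic to $X_{N,p}\otimes k$, which is of one type or the other. (If one wants an isomorphism of pairs rather than merely of surfaces, separatedness guarantees, as you note, that the specialized isomorphism still intertwines the automorphisms; the paper itself only claims the isomorphism of surfaces.)
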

     \begin{proof}
     The existence is guaranteed by Theorem 4.1. Now assume $X$ is a K3 surface over $k$ and $\alpha \in \Aut (X)$ is purely non-symnplectic of order $N$. Since $\alpha$ is non-symplectic tame, by Theorem 3.3, there exists a scheme lifting $\mathfrak{X}/W$ of $X$ and
     an automorphism $\mathfrak{a} : \mathfrak{X} \to \mathfrak{X}$ such that $\mathfrak{a} \otimes k = \alpha$. Then $\mathfrak{X} \otimes \C$ is a complex K3 surface equipped with a purely non-symplectic automorphism of order $N$, so $\mathfrak{X} \otimes \C \simeq X_{N}$.
     It follows that $X$ is isomorphic to $X_{N,p} \otimes k$.
     \end{proof}

\vskip 1cm

\noindent
J.Jang\\
Department of Mathematics\\
University of Ulsan \\
Daehakro 93, Namgu Ulsan 680-749, Korea\\ \\
jmjang@ulsan.ac.kr

     \end{document}